\documentclass[a4paper,11pt]{amsart}
\usepackage[utf8]{inputenc}
\usepackage[dvips]{graphicx}
\usepackage[shortlabels]{enumitem}
\usepackage{amsmath,amssymb,color,enumitem,graphics,hyperref,latexsym,pgfplots,tikz,vmargin}
\hypersetup{
unicode=false,          
pdftoolbar=true,        
pdfmenubar=true,        
pdffitwindow=false,     
pdfstartview={FitH},    
pdftitle={Sets with large intersection properties in metric spaces},    
pdfauthor={Felipe Negreira, Emiliano Sequeira},     
pdfsubject={11J83, 28A78},   
pdfkeywords={Diophantine approximations, metric spaces, net measures}, 
pdfnewwindow=true,      
colorlinks=true,       
linkcolor=black,          
citecolor=black,        
filecolor=magenta,      
urlcolor=cyan           
}

\def\A{{\mathbf{A}}}
\def\C{{\mathbf{C}}}

\def\diam{{\mathrm{diam}}}
\def\Ee{{\mathcal{E}}}
\def\eps{{\epsilon}}
\def\Ff{{\mathcal{F}}}
\def\Gg{{\mathcal{G}}}
\def\Hh{{\mathcal{H}}}
\def\inte{{\mathrm{int}}}
\def\Jj{{\mathcal{J}}}
\def\lev{{\mathrm{lev}}}
\def\Ll{{\mathcal{L}}}
\def\Mm{{\mathcal{M}}}
\def\N{{\mathbb{N}}}
\def\Qq{{\mathcal{Q}}}
\def\R{{\mathbb{R}}}

\def\Z{{\mathbb{Z}}}
\def\K{{\mathbf{K}}}

\newcommand{\norm}[1]{{\|{#1}\|}}
\newcommand{\abs}[1]{{\left|{#1}\right|}}

\newtheorem{lemma}{Lemma}[section]
\newtheorem{proposition}[lemma]{Proposition}
\newtheorem{theorem}[lemma]{Theorem}

\theoremstyle{definition}
\newtheorem{definition}[lemma]{Definition}
\newtheorem{example}[lemma]{Example}

\theoremstyle{remark}

\begin{document}
\title{Sets with large intersection properties in metric spaces}
\author[F. Negreira, E. Sequeira]{Felipe Negreira, Emiliano Sequeira}
\address{Departamento de Matemática y Estadística del Litoral, Universidad de la República, Uruguay.}
\email{felipenegreira@adinet.com.uy}
\address{Sobolev Institute of Mathematics, 4 Acad. Koptyug Ave., Novosibirsk 630090, Russia}
\email{esequeiramanzino@gmail.com}

\keywords{Diophantine approximations, metric spaces, net measures}
\subjclass[2010]{11J83, 28A78}
\begin{abstract}
In this work we reproduce the characterization of $\Gg^s$-sets from the euclidean setting \cite{F3} to more general metric spaces. These sets have Hausdorff dimension at least $s$ and are closed by countable intersections, which is particularly useful to estimate the dimension of the so called sets of $\alpha$-approximable points (that typically appear in Diophantine approximations).
\end{abstract}
\maketitle

\section{Introduction}
In \cite{F3} Falconer gave a characterization for certain classes of sets in $\R^d$ with Hausdorff dimension at least $s$ that have a ``large intersection'' property in the sense that their countable intersections also have dimension $s$ or larger. These are the so called $\Gg^s${\it-sets}: subsets $F\subset\R^d$ which are $G_\delta$ (countable intersection of open sets) and such that
\begin{equation*}
\dim_\Hh\bigcap_if_i(F)\geqslant s
\end{equation*}
holds for all sequences of similarity transformations $\{f_i\}$, where $\dim_\Hh$ denotes the Hausdorff dimension. Equivalently, one can describe this sets in terms of bi-Lipschitz functions: $F$ is a $\Gg^s$-set if and only if for every open set $V\subset\R^d$ and every sequence of bi-Lipschitz maps $f_i:V\to\R^d$
\begin{equation}\label{gsets}
\dim_\Hh\bigcap_if_i^{-1}(F)\geqslant s.
\end{equation}
Further, the results in \cite{F3} show that $\Gg^s$ is the maximal class of $G_\delta$ sets with Hausdorff dimension $s$ or larger, and which is closed under countable intersections and similarities or bi-Lipschitz homeomorphisms in general.

The main tool used by Falconer to describe $\Gg^s$-sets are the so called {\it net measures} and its respective contents. To construct such measures one needs a suitable family of sets (see \cite[Definition 31]{R}), which in $\R^d$ take the natural form of {\it dyadic partitions}: this is, cubes with the form $Q_{j,k}=2^j(k+[0,1)^d)$, $j\in\Z,k\in\Z^d$. Using these dyadic cubes, the net (outer) measure $\Mm^s$ of a set $E\subset\R^d$ is defined by a Carathéodory construction as $\Mm^s(E):=\lim_{r\downarrow0}\Mm^s_r(E)$, where
\begin{equation}\label{netwithdiam}
\Mm^s_r(E):=\inf\left\{\sum_n\diam(Q_n)^s:E\subset\bigcup_n Q_n, Q_n\text{ dyadic cube with }\diam(Q_n)\leqslant r\right\}.
\end{equation}
In \cite[Theorem B]{F3} it is claimed that \eqref{gsets} is equivalent to
\begin{equation}\label{nottrue}
\Mm^s_\infty(F\cap Q)=\Mm^s_\infty(Q),\quad\text{ for all dyadic cube }Q.
\end{equation}
Actually, this equivalence as stated is not always true: Example \ref{ejemploreal} below shows that \eqref{nottrue} is strictly stronger than \eqref{gsets}. However, as noted by Bugeaud in \cite{B}, this problem can be easily fixed by weakening \eqref{nottrue} to
\begin{equation}\label{Bugeaud}
\Mm^t_\infty(F\cap Q)=\Mm^t_\infty(Q),\quad\text{ for all dyadic cube }Q\text{ and all }t<s.
\end{equation}

When trying to adapt these definitions to other metric spaces, the problem with \eqref{gsets} is that we may have very few similarities or bi-Lipschitz maps and so we might end with a weak condition that fails to preserve some fundamental properties of $\Gg^s$-sets in $\R^d$. Indeed, as Example \ref{ejemplopegado} shows, it is possible to construct one of such metric spaces where there exists a large class of sets verifying \eqref{gsets} but, for example, which is not closed under countable intersections. Instead, provided that we have a dyadic-type partition, \eqref{nottrue} or \eqref{Bugeaud} give a better generalization.

The existence of dyadic-type cubes in more general settings such as {\it spaces of homogeneous type} (quasi-metric spaces with a doubling Borel measure) is known since at least the work of M. Christ \cite{Ch} in 1990. More recently, T. Hytönen and A. Kairema \cite{HK} gave a sharper dyadic decomposition for any {\it geometrically doubling} metric space. One could then expect the results of Falconer \cite{F3} to be also valid in these metric (or quasi-metric) spaces. However, dyadic cubes in an arbitrary metric space as understood in \cite{Ch,HK} are not exactly perfect cubes as in $\R^d$ but rather {\it quasi-balls}: sets trapped in between a ball $B$ and a scalar multiple $\K B$, see $\S$\ref{DescDiadica} below. Thus, in the more general case, we only have an approximate idea of the diameter of a dyadic cube. The problem with this is that, a priory, we do not know whether the function $Q\mapsto\diam(Q)^s$ is sub-additive or not -a property that plays key role in many of the proofs of \cite{F3}.

When we have a measure with enough regularity, a way to circumvent this is to change $\diam(Q)^s$ in \eqref{netwithdiam} for the measure of $Q$ taken with an appropriate exponent. For example, in $\R^d$ for $0<s\leqslant d$ and $0<r\leqslant\infty$ if we set
\begin{equation*}
\Mm^s_{\Ll,r}(E):=\inf\left\{\sum_n\Ll(Q_n)^{s/d}:E\subset\bigcup_n Q_n, Q_n\text{ dyadic cube with }\Ll(Q_n)\leqslant r\right\},
\end{equation*}
where $\Ll$ denotes the usual Lebesgue measure, then $\Mm^s_\Ll=\lim_{r\downarrow0}\Mm^s_{\Ll,r}$ and $\Mm^s_{\Ll,\infty}$ are scalar multiples of $\Mm^s$ and $\Mm^s_\infty$ respectively. Moreover, one could work with more general Borel measures $\mu$ instead of $\Ll$ and still be able to reproduce much of Falconer's results. Indeed, in \cite{P} Persson shows how, keeping the same standard dyadic partition in $\R^d$ but working with an arbitrary non-atomic locally finite Borel measure $\mu$, one can define the equivalent $\Gg^s_\mu$-sets in a way such that many the same properties of \cite{F3} still hold.

Here, using the same notation as in \cite{P}, we translate these definitions into a metric space of homogeneous type.

\begin{definition}\label{jsets}
Let $(X,\mu)$ be a metric space of homogeneous type. Let $d=\dim_\Hh X$, take $\Qq$ a dyadic partition in $X$ and $0<s\leqslant d$. We say that a $G_\delta$ set $F\subset X$ is a $\Jj^s_\mu$-set if
\begin{equation*}
\Mm^s_{\mu,\infty}(F\cap Q)=\Mm^s_{\mu,\infty}(Q)
\end{equation*}
holds for all $Q\in\Qq$. We donote the class of $\Jj^s_\mu$-sets in $X$ by $\Jj^s_\mu(X)$ or simply $\Jj^s_\mu$.

We also define the class of $\Gg^s_\mu$-sets in $X$ by 
\begin{equation*}
\Gg^s_\mu=\Gg^s_\mu(X):=\bigcap_{t<s}\Jj^t_\mu(X).
\end{equation*}
\end{definition}

%

Note that the condition that defines a $\Gg^s_\mu$-set in a general metric space $X$ is effectively the equivalent of \eqref{Bugeaud} in $\R^d$. For both of these type of sets the corresponding equivalences of \cite[Theorem B]{F3} still hold.

\begin{theorem}\label{teoB}
Let $(X,\mu)$ be a metric space of homogeneous type with $d=\dim_\Hh X$. Take $F\subset X$ a $G_\delta$ subset and $0<s\leqslant d$. Then the following statements are equivalent:
\begin{enumerate}[(i)]
\item $F$ is a $\Jj^s_\mu$-set, i.e. for all dyadic cubes $Q$ we have
\begin{equation*}
\Mm^s_{\mu,\infty}(F\cap Q)=\Mm^s_{\mu,\infty}(Q).
\end{equation*}

\item For all open sets $U$ we have 
\begin{equation*}
\Mm^s_{\mu,\infty}(F\cap U)=\Mm^s_{\mu,\infty}(U).
\end{equation*}

\item There exists a constant $0<c\leqslant1$ such that for all open sets $U$ we have
\begin{equation*}
\Mm^s_{\mu,\infty}(F\cap U)\geqslant c\Mm^s_{\mu,\infty}(U).
\end{equation*}

\item There exists a constant $0<c\leqslant1$ such that for all dyadic cubes $Q$ we have
\begin{equation*}
\Mm^s_{\mu,\infty}(F\cap Q)\geqslant c\Mm^s_{\mu,\infty}(Q).
\end{equation*}
\end{enumerate}
\medskip
Moreover, the same is true for $\Gg^s_\mu$-sets with the corresponding adaptations.
\end{theorem}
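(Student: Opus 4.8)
The plan is to establish the cycle $(ii)\Rightarrow(iii)\Rightarrow(iv)\Rightarrow(i)\Rightarrow(ii)$, using throughout that $\Dd^s_\infty$ is monotone and countably subadditive, that it obeys the identity $\Dd^s_\infty(Q)=\Ll(Q)^{s/d}$ for every dyadic cube $Q$ (the analogue of \eqref{hauscontQ} made available by Ahlfors regularity), and that, by the nestedness of dyadic cubes, any dyadic cover may be thinned to a cover by \emph{disjoint} cubes of no larger cost. Since $F\cap Q\subseteq Q$ and $F\cap U\subseteq U$, the inequalities $\leqslant$ in $(i)$ and $(ii)$ hold automatically, so only the reverse inequalities and the passage from the constant $c$ to the constant $1$ carry content. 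Two geometric devices from the Ahlfors regular setting will be used repeatedly: each dyadic cube $Q$ traps a metric ball, $B(x,c_1\delta)\subseteq Q\subseteq\K B(x,c_1\delta)$, whose content is comparable to $\Ll(Q)^{s/d}$; and every open $U$ admits a Whitney-type partition $U=\bigsqcup_k R_k$ into disjoint dyadic cubes $R_k\subseteq U$.

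The implication $(ii)\Rightarrow(iii)$ is immediate with $c=1$. For $(iii)\Rightarrow(iv)$ I would pass from a dyadic cube $Q$ to the open set $\inte Q$: monotonicity gives $\Dd^s_\infty(F\cap Q)\geqslant\Dd^s_\infty(F\cap\inte Q)\geqslant c\,\Dd^s_\infty(\inte Q)$, and since $\inte Q$ contains the inscribed ball $B(x,c_1\delta)$, Ahlfors regularity yields $\Dd^s_\infty(\inte Q)\gtrsim\Ll(Q)^{s/d}=\Dd^s_\infty(Q)$, so absorbing the implied constant into $c$ gives $(iv)$. The remaining transfer $(i)\Rightarrow(ii)$ is its rigid, equality counterpart: one notes that $(i)$ forces $F$ to be content dense (every cube is full, hence meets $F$), exhausts $U$ by the finite unions $U_j=\bigsqcup_{k\leqslant j}R_k$ of its Whitney cubes, applies the cube equality on each $R_k$, and passes to the limit $U_j\uparrow U$; here subadditivity gives one inequality directly, and the matching lower bound must be recovered from the density of $F$ and the fact that the remainder uncovered by the $R_k$ carries no content.

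The heart of the theorem is the amplification $(iv)\Rightarrow(i)$, upgrading $\Dd^s_\infty(F\cap Q)\geqslant c\,\Dd^s_\infty(Q)$ to equality. I would set $t=\inf_Q\Dd^s_\infty(F\cap Q)/\Ll(Q)^{s/d}\in[c,1]$ and show $t=1$. The starting observation is that $(iv)$ makes $F$ content dense: no dyadic cube is disjoint from $F$, since such a cube would have $\Dd^s_\infty(F\cap Q)=0<c\,\Ll(Q)^{s/d}$. Hence, if $\{Q_n\}$ is a near optimal disjoint cover of $F\cap Q$, the remainder $Q\setminus\bigcup_n Q_n$ is free of $F$ and can contain no dyadic cube, and the goal is to convert this scarcity into the statement that the remainder carries no $\Dd^s_\infty$ content, whence $\Dd^s_\infty(Q)\leqslant\sum_n\Ll(Q_n)^{s/d}=\Dd^s_\infty(F\cap Q)$ and $t=1$. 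I expect this last point to be the main obstacle: a fat Cantor type set shows that an $F$-free set containing no dyadic subcube may nonetheless carry positive net content, so the argument must exploit $(iv)$ at \emph{every} scale rather than at $Q$ alone. The natural route is the dual description of the net content as a supremum of Frostman measures $\mu$ with $\mu(R)\leqslant\Ll(R)^{s/d}$ on all dyadic $R$: condition $(iv)$ supplies, in each cube, such a measure carrying a definite proportion of the available capacity on $F$, and one redistributes mass onto $F$ scale by scale, the residual capacity contracting by a factor $(1-c)$ at each stage so that the geometric series $c+(1-c)c+\cdots$ recovers the full capacity $\Ll(Q)^{s/d}$. Checking that this redistribution respects the capacity constraints at all coarser scales is the delicate combinatorial core, and it is exactly here that the identity $\Dd^s_\infty(Q)=\Ll(Q)^{s/d}$, furnished by Ahlfors regularity in place of the unavailable \eqref{hauscontQ}, is indispensable; once $(iv)\Rightarrow(i)$ is secured, the same Frostman redistribution applied to the Whitney cubes of $U$ closes the transfer $(i)\Rightarrow(ii)$.
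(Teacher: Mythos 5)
Your handling of $(ii)\Rightarrow(iii)$ and $(iii)\Rightarrow(iv)$ matches the paper's: the inscribed ball of \eqref{balls} plus Ahlfors regularity gives $\Dd^s_\infty(\inte(Q))\geqslant c_0\Dd^s_\infty(Q)$, and the rest follows by monotonicity. But the two implications that carry the content of the theorem are not actually proved in your proposal. For $(i)\Rightarrow(ii)$, your sketch (``apply the cube equality on each Whitney cube $R_k$ and pass to the limit'') skips the real mechanism: since $\Dd^s_\infty$ is only subadditive, $\sum_k\mu(R_k)^{s/d}$ can strictly exceed $\Dd^s_\infty(U)$, so summing the equalities over the $R_k$ gives nothing in the right direction. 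What is needed — and what the paper does — is to take an \emph{arbitrary} dyadic cover $\{P_m\}$ of $F\cap U$ and, using nestedness, split the Whitney cubes $Q_n$ into those contained in some $P_m$ (whose covering cost trivially dominates $\mu(Q_n)^{s/d}$) and those for which all relevant $P_m$ sit inside $Q_n$ (where hypothesis $(i)$ gives $\sum_{P_m\subset Q_n}\mu(P_m)^{s/d}\geqslant\Dd^s_\infty(F\cap Q_n)=\mu(Q_n)^{s/d}$); one then swaps the small $P_m$ for the $Q_n$ they fill to obtain a cover of all of $U$ at no greater cost. That dichotomy is absent from your outline, and also the claim that ``the remainder uncovered by the $R_k$ carries no content'' is vacuous, since the Whitney cubes already exhaust $U$.

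For $(iv)\Rightarrow(i)$ you propose a Frostman-measure redistribution with residual capacity contracting by $(1-c)$ at each scale, and you yourself flag that ``checking that this redistribution respects the capacity constraints at all coarser scales is the delicate combinatorial core.'' That core is exactly where the proof lives, and it is left unverified; as stated, the proposal for this implication is a plan, not a proof. The paper's route is more elementary and avoids dual measures entirely: given a disjoint dyadic cover $\{Q_n\}$ of $F\cap Q$ of finite cost, only finitely many $Q_n$ occur at each level, so one can choose $j_0$ with $\sum_{\lev(Q_n)\geqslant j_0}\mu(Q_n)^{s/d}<\eps$; building the partition of $Q$ by the cubes $Q_n$ of level $<j_0$ together with level-$j_0$ cubes, hypothesis $(iv)$ is invoked only on the latter, i.e.\ only on the part of the sum already known to be $<\eps$, so the total undercount is $(c^{-1}-1)\eps$, which vanishes as $\eps\to0$. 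No geometric series or capacity bookkeeping is needed. I would encourage you to either carry out your redistribution argument in full (it is not obviously salvageable as stated, precisely because the capacity constraint $\mu(R)\leqslant\Ll(R)^{s/d}$ must be verified simultaneously at all coarser dyadic scales after each redistribution step) or adopt the tail-truncation argument.
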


These classes of sets also share some of the properties with their euclidean counterpart \cite[Theorem C]{F3}.

\begin{theorem}\label{teoC}
Let $(X,\mu)$ be a metric space of homogeneous type with $d=\dim_\Hh X$ and take $0<s\leqslant d$.
\begin{enumerate}[(i)]
\item If $0<t\leqslant s$, then $\Jj^s_\mu\subset\Jj^t_\mu$.

\item If $F\subset E\subset X$ and $F\in\Jj^s_\mu$, then $E\in\Jj^s_\mu$.

\item If $X$ is complete, then $\Jj^s_\mu$ is closed by countable intersections.
\end{enumerate}
\medskip
The same properties hold for $\Gg^s_\mu$-sets.
\end{theorem}

If the measure $\mu$ has more regularity, in particular if it is Ahlfors regular (i.e. if $\mu(B(x,r))\sim r^d$ for all ball $B(x,r)$ in $X$) then a couple of properties can be added.

\begin{theorem}\label{teoCAl}
Let $(X,\mu)$ be an Ahlfors regular space of dimension $d$ and take $0<s\leqslant d$.
\begin{enumerate}[(i)]
\item If $F\in\Jj^s_\mu$, then $\Hh^s_\infty(F)>0$ and in particular $\dim_\Hh F\geqslant s$.

\item Let $(Y,\nu)$ be another Ahlfors regular space and $f:X\to Y$ a bi-Lipschitz homeomorphism. Then $F\in\Jj^s_\mu(X)$ if and only if $f(F)\in\Jj^s_\nu(Y)$. 

\end{enumerate}
\medskip
The same properties hold for $\Gg^s_\mu$-sets.
\end{theorem}

Point $(i)$ of Theorem \ref{teoC} in particular means that $\Jj^s_\mu(X)\subset\Gg^s_\mu(X)$. Additionally, the equivalence between $\Mm_\infty$ and $\Mm_{\Ll,\infty}$ in $\R^d$ implies $\Gg^s_\Ll(\R^d)=\Gg^s(\R^d)$, where as before $\Ll$ denotes the Lebesgue measure in $\R^d$. Altogether
\begin{equation*}
\Jj^s_\Ll(\R^d)\subset\Gg^s_\Ll(\R^d)=\Gg^s(\R^d).
\end{equation*}
We will see, again in Example \ref{ejemploreal}, that the first inclusion is strict. Therefore, although $\Jj^s_\Ll(\R^d)$ is composed of sets with Hausdorff dimension at least $s$ (by $(i)$ of Theorem \ref{teoCAl}), and it is closed under countable intersections (by $(iii)$ in Theorem \ref{teoC}) and by bi-Lipschitz maps (by $(ii)$ of Theorem \ref{teoCAl}), it is not maximal among classes of sets with those conditions since $\Gg^s(\R^d)$ is strictly larger.

Classic examples of $\Gg^s$-sets in $\R$ are those obtained by Diophantine approximations. That is, for a fixed $\alpha>2$ we take the set $F_{\alpha}\subset\R$ of all real numbers $x$ such that
\begin{equation*}
\abs{x-\frac{p}{q}}\leqslant q^{-\alpha}
\end{equation*}
is verified for infinitely many rational numbers $p/q$. This set is in $\Gg^{1/\alpha}$, see e.g. \cite[Example 8.9]{F2}. In fact, even for a smaller set of rationals as the dyadic centers $p/q=k2^{-j}$ we still obtain a $\Gg^{1/\alpha}$-set. Points in this set have been called $\alpha$-approximable (by dyadics) and they are useful in the study of multifractal analysis of functions in Sobolev and Besov spaces \cite{AMS,FJ,JAF}.  
There are several generalizations and applications of these classical Diophantine approximations, we refer to \cite{BS,BDV,BF,B,DRV89,DRV} and the references therein.

Observe that the points $x\in F_\alpha$ can be expressed as the limit of a rational sequences $\{p_j/q_j\}$ where $0<q_j<q_{j+1}$ and $\abs{x-p_j/q_j}\leqslant(q_j)^{-\alpha}$. 
This can be naturally generalized to any metric space by what we call $(\Ee,\alpha)${\it-approximable points} (or simply $\alpha${\it-approximable points}): limits of sequences $\{x_j\}$ that converge at speed $\eps_j^\alpha$, where $\Ee=\{\eps_j\}$ is a sequence converging to $0$ and $\alpha$ is a real number bigger than $1$. For the precise definition see $\S$\ref{epsnet} below. 

As an application of the previous results we show that $\alpha$-approximable points are particular cases of $\Gg^s_\mu$-sets in certain regular measurable metric spaces.

\begin{theorem}\label{teoD}
Let $(X,\mu)$ a complete Ahlfors regular space of dimension $d$. Let $\alpha>1$ and $F\subset X$ a set of $(\Ee,\alpha)$-approximable points. Then $F\in\Gg^{d/\alpha}_\mu$ and therefore $\dim_\Hh F\geqslant d/\alpha$. Moreover, if the sequence $\Ee$ has exponential decay, then $\dim_\Hh F=d/\alpha$.
\end{theorem}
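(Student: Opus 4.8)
The plan is to exploit the $\limsup$ structure of $F$ together with the countable-intersection stability of $\Gg^s$ (Theorem \ref{teoC}$(iv)$). By the definition in $\S$\ref{SecAproximables}, $F$ is the set of points approximated infinitely often, so we may write
\[
F=\bigcap_{N}F_N,\qquad F_N=\bigcup_{j\geqslant N}\bigcup_i B\bigl(x_{j,i},\eps_j^\alpha\bigr),
\]
where $\{x_{j,i}\}_i$ is the approximating family at scale $\eps_j$. Each $F_N$ is a union of open balls, hence open and therefore $G_\delta$, and so is $F$. Since $X$ is complete, Theorem \ref{teoC}$(iv)$ tells us it suffices to prove that \emph{each} $F_N$ lies in $\Gg^s$ for every $s<d/\alpha$; then $F=\bigcap_N F_N\in\Gg^s$, and Theorem \ref{teoC}$(i)$ yields $\dim_\Hh F\geqslant s$ for all such $s$, i.e. $\dim_\Hh F\geqslant d/\alpha$.

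To show $F_N\in\Gg^s$ I would verify condition $(iv)$ of Theorem \ref{teoB}, namely a uniform bound $\Dd^s_\infty(F_N\cap Q)\geqslant c\,\Dd^s_\infty(Q)$ over all dyadic cubes $Q$. The key observation — and the technical heart of the argument — is that already a \emph{single} scale suffices: there is a constant $c>0$, depending only on the Ahlfors-regularity constants and on $s<d/\alpha$, such that for every dyadic cube $Q$ and every $j$ with $\eps_j$ small enough relative to $\diam(Q)$,
\[
\Dd^s_\infty\Bigl(\bigcup_i B(x_{j,i},\eps_j^\alpha)\cap Q\Bigr)\geqslant c\,\Dd^s_\infty(Q).
\]
Granting this, for a fixed $Q$ one simply chooses $j\geqslant N$ with $\eps_j$ sufficiently small (possible since $\eps_j\downarrow 0$) and bounds $\Dd^s_\infty(F_N\cap Q)$ from below by the single-scale content; the constant $c$ being uniform, $(iv)$ follows.

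The single-scale estimate I would obtain through a mass distribution argument (the mass distribution principle being valid for $\Dd^s_\infty$ just as for Hausdorff content). After passing to a maximal $\eps_j$-separated subfamily of the centers lying in $Q$ — which by the density built into the definition remains $O(\eps_j)$-dense — Ahlfors regularity gives $\asymp(\diam Q/\eps_j)^d$ of them. I would then define a measure $\nu$ supported on the balls $B(x_{j,i},\eps_j^\alpha)$ of total mass $\asymp\diam(Q)^s\asymp\Dd^s_\infty(Q)$, distributing equal mass $\asymp\diam(Q)^{s-d}\eps_j^{d}$ to each. The Frostman condition $\nu(B(x,r))\leqslant C\,r^s$ is checked on three ranges: for $r\geqslant\eps_j$ the ball meets $\lesssim(r/\eps_j)^d$ of the small balls, giving $\nu(B(x,r))\lesssim r^d\diam(Q)^{s-d}\leqslant r^s$; for $\eps_j^\alpha\leqslant r\leqslant\eps_j$ it meets $O(1)$ of them; and the decisive case $r\approx\eps_j^\alpha$ reduces exactly to the inequality $\diam(Q)^{d-s}\geqslant\eps_j^{d-\alpha s}$, which holds for $\eps_j$ small precisely because $s<d/\alpha$ forces $d-\alpha s>0$. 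This verification, with its uniform constant and its careful use of the separation and density of the net, is where I expect the real work to lie.

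Finally, for the equality under exponential decay I would prove the matching upper bound $\dim_\Hh F\leqslant d/\alpha$ by the natural cover. Fix a bounded ball $B_0$ of radius $R$ and $s>d/\alpha$. Covering $F\cap B_0$ by the balls of $F_N$, which have radius $\leqslant\eps_N^\alpha$, and using that the scale-$j$ centers are $\eps_j$-separated (so $\lesssim(R/\eps_j)^d$ of them meet $B_0$), one gets
\[
\Hh^s_{2\eps_N^\alpha}(F\cap B_0)\lesssim\sum_{j\geqslant N}\Bigl(\tfrac{R}{\eps_j}\Bigr)^{d}(\eps_j^\alpha)^s=R^d\sum_{j\geqslant N}\eps_j^{\alpha s-d}.
\]
Here $\alpha s-d>0$, and exponential decay $\eps_j\lesssim\lambda^{-j}$ with $\lambda>1$ makes this series convergent, so its tail tends to $0$; letting $N\to\infty$ gives $\Hh^s(F\cap B_0)=0$. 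Covering the separable space $X$ by countably many such balls yields $\dim_\Hh F\leqslant s$ for every $s>d/\alpha$, and with the lower bound this gives $\dim_\Hh F=d/\alpha$. It is precisely the summability of $\sum_j\eps_j^{\alpha s-d}$ that fails for slowly decaying $\Ee$, which explains why exponential decay is needed for the reverse inequality.
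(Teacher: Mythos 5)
Your proposal is correct and follows essentially the same route as the paper: a single-scale Frostman/mass-distribution estimate uniform over cubes (the paper's Lemma \ref{LemaAprox}, proved for quasi-balls via the spherical content $\Phi^s_\infty$ and then transferred to $\Dd^s_\infty$), monotonicity to place each tail union $\bigcup_{j\geqslant N}E_j$ in $\Gg^s$ via Theorem \ref{teoB}$(iv)$, closure under countable intersections from Theorem \ref{teoC}$(iv)$, and the identical covering count $\sum_j\eps_j^{\alpha s-d}$ for the upper bound under exponential decay. The only differences are presentational (you work directly with dyadic cubes rather than general $K$-quasi-balls, and phrase the key estimate as a Frostman condition rather than a bound on $\nu_j$ of covering balls).
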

Let us remark that the lower bound for the Hausdorff dimension of these type of sets has been previously obtained on compact Ahlfors regular spaces in \cite[Theorem 2]{BDV} by using the concept of {\it ubiquitous systems}, first developed in \cite{DRV}. 

The upper bound in Theorem \ref{teoD}, however, is not true in general in the sense that exponential decay is required. For example, Jarník's Theorem \cite{J} shows that the Hausdorff dimension of the set of points obtained by Diophantine approximation on $\R$ for $\alpha>2$ (where the decay of convergence is polynomial) is actually $2/\alpha$ (in fact, it is also possible to prove that it is a $\Gg^{2/\alpha}$-set, see \cite[$\S10$]{F2}). In $\S$\ref{SecAproximables} we give an upper bound for the dimension in case of polynomial decay.

To end with, let us briefly describe how this work is organized. In Section 2 we give the basic definitions and properties on spaces of homogeneous type, along some preliminary results. In Sections 3 we prove Theorem \ref{teoB} and in Section 4 we prove Theorems \ref{teoC} and \ref{teoCAl}. In Section 5 we apply previous results to show that $\alpha$-approximable sets are a special case of $\Gg^s_\mu$-sets by proving Theorem \ref{teoD}. Finally in Section 6 we compare $\Jj^s_\mu$-sets with $\Gg^s_\mu$-sets and their links with previous established definitions \cite{B,F3}.

\section{Preliminaries}
\subsection{Doubling measures and doubling spaces}\label{doubling}
Let $X$ be a metric space. We denote the distance between two points $x,y\in X$ by $\abs{x-y}$, and the ball of center $x\in X$ and radius $r>0$ by $B(x,r):=\{y:\abs{y-x}\leqslant r\}$ (our balls will be always closed).

A Borel measure $\mu$ on $X$ is said to be {\it doubling} if it is finite and positive in every ball and there exists a constant $\C\geqslant1$ such that for every $x\in X$ and $r>0$,
\begin{equation*}
\mu(B(x,2r))\leqslant\C\mu(B(x,r)).
\end{equation*}
This inequality can be reformulated as
\begin{equation}\label{homogeneo}
\mu(B(x,R))\leqslant\C\left(\frac{R}{r}\right)^\beta\mu(B(x,r))
\end{equation}
for any $0<r<R$ and $x\in X$, and where $\beta\geqslant1$ is some independent constant (see e.g. \cite[Lemma 4.7]{H}).  
In this case we also say that $(X,\mu)$ is a metric space of {\it homogeneous type}.

A metric space $X$ is {\it (geometrically) doubling} if there exists a constant $N\in\N$ such that every bounded subset $A\subset X$ can be covered by no more than $N$ subsets with diameter at most $\diam(A)/2$. A metric space of homogeneous type is always doubling. Reciprocally, every complete doubling space admits a doubling measure, see \cite[$\S13$]{H}.



We can mimic the proof of Bolzano-Weierstrass property on $\R^d$ to get the following result:

\begin{lemma}\label{seqcomp} Let $X$ be a complete doubling metric space. Then every bounded and closed subset of $X$ is compact.
\end{lemma}



\subsection{Dyadic decomposition}\label{DescDiadica}

By a {\it dyadic decomposition} of parameter $0<\delta<1$ on a metric space $X$ we mean a family $\Qq$ of measurable subsets $Q_{j,k}$, called {\it dyadic cubes}, with $j\in\Z$ and $k\in I_j$ satisfying the following properties:
\begin{itemize}[label=---]
\item For every $j\in\Z$
\begin{equation}\label{partition}
X=\bigcup_{k\in I_j}Q_{j,k}.
\end{equation}

\item There is a uniform constant $\K\geqslant 1$ such that every dyadic cube $Q_{j,k}$ is a $\K$-quasi-ball with radius $\delta^j$. That is, there exists a point $x\in Q_{j,k}$ such that
\begin{equation}\label{balls}
B(x,\delta^j)\subset Q_{j,k}\subset B(x,\K\delta^j).
\end{equation}

\item If $i\geqslant j$, then
\begin{equation}\label{disj}
\text{either }\:Q_{i,k'}\subset Q_{j,k}\:\text{ or }\:Q_{i,k'}\cap Q_{j,k}=\emptyset.
\end{equation}
\end{itemize}
We will say that the {\it level} of an arbitrary dyadic cube $Q$ is $j\in\N$, and denote $\lev(Q)=j$, if there exists $k\in I_j$ such that $Q=Q_{j,k}$.

As said in the introduction, due to \cite{HK}, for an appropriate choice of $\delta$ and $\K$ one can always construct a dyadic decomposition on a metric space of homogeneous type. Notice that, if $X$ is bounded, there exitst $j_0\in\Z$ such that $\#I_j=1$ and for every $j\leqslant j_0$ the dyadic cube of level $j$ is the whole space $X$. 

\subsection{Hausdorff and net outer measures}\label{haus}
In a general metric space $X$ the {\it Hausdorff outer measure} for any $s\geqslant0$ is defined in the following way: given $E\subset X$,
\begin{equation*}
\Hh^s(E):=\lim_{r\downarrow0}\Hh^s_r(E),\quad\text{with }\Hh^s_r(E):=\inf\left\{\sum_n\diam(E_n)^s:E\subset\bigcup_n E_n,\diam(E_n)\leqslant r\right\}.
\end{equation*}
The {\it Hausdorff content} of a set $E$ is $\Hh^s_r(E)$ for $r=\infty$. From this definition it follows that if $\Hh^s(E)<\infty$, then $\Hh^t(E)=0$ for every $t>s$. The {\it Hausdorff dimension} of a subset $E\subset X$ is then defined as
\begin{equation}\label{DefDimH}
\dim_\Hh E:=\inf\{s\geqslant0:\Hh^s(E)=0\}=\sup\{s\geqslant0:\Hh^s(E)=\infty\}.
\end{equation}
Using Carathéodory's Theorem for metric spaces one can see that $\Hh^s$ defines a Borel measure on $X$ for every $s\geqslant0$.


Next, in a metric space of homogeneous type $X$ with a dyadic decomposition $\Qq$ and Hausdorff dimension $d$ (which is always finite, see e.g. \cite[$\S$10]{H}) we define a {\it net outer measure} for any $s\geqslant0$ as $\Mm^s_\mu(E):=\lim_{r\downarrow0}\Mm^s_{\mu,r}(E)$, where
\begin{equation*}
\Mm^s_{\mu,r}(E):=\inf\left\{\sum_n\mu(Q_n)^{s/d}:E\subset\bigcup_n Q_n,\ Q_n\text{ is a dyadic cube with }\mu(Q_n)\leqslant r\right\}.
\end{equation*}
We also define the \textit{net content} of a subset $E\subset X$ as $\Mm_{\mu,r}^s(E)$ with $r=\infty$.



Different dyadic decomposition may give different net measures and contents but they are nonetheless equivalent:

\begin{proposition}\label{equivD}
Let $s\geqslant0$, and $\Qq$ and $\widetilde{\Qq}$ be two dyadic decompositions on a metric space of homogeneous type $(X,\mu)$ and $\Mm^s_\mu$ and $\widetilde{\Mm}^s_\mu$ their respective net measures. Then, for every $s\geqslant0$ there exists a constant $L\geqslant1$, such that
\begin{equation*}
L^{-1}\widetilde{\Mm}^s_\mu\leqslant\Mm^s_\mu\leqslant L\widetilde{\Mm}^s_\mu.    
\end{equation*}
The same is true for the (respective) contents. 
\end{proposition}

Before proving Proposition \ref{equivD} let us see an elementary lemma that will be also useful in the future.  

\begin{lemma}\label{lemaCC}
Fix a dyadic decomposition $\{Q_{j,k}\}$ of parameter $\delta$ on a metric space of homogeneous type $(X,\mu)$. There exists an increasing function $H:(0,+\infty)\to (1,+\infty)$ such that the number of dyadic cubes of level $j$ that intersect a ball of radius $R$ is at most $H(R/\delta^j)$. Furthermore, if $Q$ and $Q'$ are two of these cubes, then
\begin{equation}\label{RelCubos}
H(R/\delta^j)^{-1}\mu(Q)\leqslant\mu(Q') \leqslant H(R/\delta^j)\mu(Q).    
\end{equation}
\end{lemma}

\begin{proof}
Let $\K$ be the constant as in \eqref{balls} and denote by $\{Q_i\}_{i\in I}$ the set of dyadic cubes of level $j$ that intersect a ball $B(x,R)$. Then the triangle inequality gives $Q_i\subset B':=B(x,R+2\K\delta^j)$ for every $i\in I$.

Now we consider a set of points $\{x_i\}_{i\in I}$ such that $B(x_i,\delta^j)\subset Q_i$. Then using \eqref{homogeneo} we have
\begin{align*}
\mu(B')\leqslant\mu(B(x_i,2(R+2\K\delta^j)))&\leqslant\C2^\beta\left(\frac{R+2\K\delta^j}{\delta^j}\right)^\beta \mu(B(x_i,\delta^j))
\\
&\leqslant\C2^\beta\left(\frac{R+2\K\delta^j}{\delta^j}\right)^\beta \mu(Q_i).
\end{align*}
Defining $H(t):=\C2^\beta(t+2\K)^\beta$ we immediately get \eqref{RelCubos} since
\begin{equation*}
\mu(Q_i)\leqslant\mu(B')\leqslant H(R/\delta^j)\mu(Q_i).
\end{equation*}
Together with the fact that the cubes $\{Q_i\}_{i\in I}$ are disjoint this also yields
\begin{equation*}
\frac{\#I}{H(R/\delta^j)}\mu(B')\leqslant\sum_{i\in I}\mu(Q_i)\leqslant\mu(B'),    
\end{equation*}
and as a consequence $\#I\leqslant H(R/\delta^j)$.
\end{proof}

Observe that in Lemma \ref{lemaCC} one can change the ball of radius $R$ for an arbitrary set of diameter $R$ and $H(R/\delta^j)$ still works as an upper bound. 

\begin{proof}[Proof of Proposition \ref{equivD}]
Let $\delta_1$ and $\delta_2$ the parameters of $\Qq$ and $\widetilde{\Qq}$, and $\K_1$ and $\K_2$ their quasi-ball constants as in \eqref{balls}. Fix a subset $E\subset X$ and $r\in(0,+\infty]$. We take $\{Q_n\}$ a covering of $E$ by dyadic cubes of $\Qq_1$ whose measures do not exceed $r$ and denote $j_n=\lev(Q_n)$.

For every $n$ we take $\ell_n\in\Z$ such that $\delta_2^{\ell_n}\leqslant\delta_1^{j_n}< \delta_2^{\ell_n-1}$. Using Lemma \ref{lemaCC} and the fact that $Q_n$ is included in a ball of radius $\K_1\delta_1^{j_n}$ we can cover $Q_n$ with no more than $H_2(\K_1\delta_1^{j_n}/\delta_2^{\ell_n})\leqslant H_2(\K_1/\delta_2)$ cubes of level $\ell_n$ in $\widetilde{\Qq}$, where $H_2$ is the increasing function given by Lemma \ref{lemaCC} for the covering $\widetilde{\Qq}$. We denote these cubes by $\{P_{n,i}\}$. 

If $B(x_n,\delta_1^{j_n})\subset Q_n\subset B(x_n,\K_1\delta^{j_n})$ are the associated balls, then for every $i$ we get $P_{n,i}\subset B(x_n,\K_1 \delta_1^{j_n}+ 2\K_2 \delta_2^{\ell_n})$. Using \eqref{homogeneo} we have
\begin{align*}
\mu(P_{n,i})&\leqslant\mu(B(x_n,\K_1\delta_1^{j_n}+2\K_2\delta_2^{\ell_n}))
\\
&\leqslant\C\left(\frac{\K_1\delta_1^{j_n}+2\K_2\delta_2^{\ell_n}}{\delta_1^{j_n}}\right)^\beta\mu(B(x_n,\delta_1^{j_n}))
\\
&\leqslant\C\left(\K_1+2\K_2\right)^\beta\mu(Q_n).    
\end{align*}
This implies
\begin{equation*}
\sum_n\sum_i\mu(P_{n,i})^{s/d}\leqslant\sum_nH_2(\K_1/\delta_2)\C^{s/d}\left(\K_1+2\K_2\right)^{\beta s/d} \mu(Q_n)^{s/d}.
\end{equation*}
Since $\mu(Q_n)\leqslant r$, by definition we have $\sum_{n,i}\mu(P_{n,i})^{s/d}\geqslant\widetilde{\Mm}^s_{\mu,r'}(E)$, where $r'=\C\left(\K_1+2\K_2\right)^\beta r$. Thus, taking the infimum over all $\Qq_1$-dyadic decompositions we obtain
\begin{equation*}
\widetilde{\Mm}^s_{\mu,r'}(E)\leqslant H_2(\K_1/\delta_2)\C^{s/d}\left(\K_1+2\K_2\right)^{\beta s/d}\Mm^s_{\mu,r}(E).
\end{equation*}
If $r=+\infty$ this means that
\begin{equation*}
\widetilde{\Mm}^s_{\mu,\infty}(E)\leqslant H_2(\K_1/\delta_2)\C^{s/d}\left(\K_1+2\K_2\right)^{\beta s/d}\Mm^s_{\mu,\infty}(E).    
\end{equation*}
If not, we take limits when $r\to0$ to obtain
\begin{equation*}
\widetilde{\Mm}^s_\mu(E)\leqslant H_2(\K_1/\delta_2)\C^{s/d}\left(\K_1+2\K_2\right)^{\beta s/d}\Mm^s_\mu(E).    
\end{equation*}
The reverse inequality follows symmetrically. Thus, if we define
\begin{equation*}
L:=\max\left\{H_2(\K_1/\delta_2)\C^{s/d}\left(\K_1+2\K_2\right)^{\beta s/d},H_1(\K_2/\delta_1)\C^{s/d}\left(\K_2+2\K_1\right)^{\beta s/d}\right\}.    
\end{equation*}
the desired inequalities are verified. 
\end{proof}

We will be mostly working with the content $\Mm^s_{\mu,\infty}$. Note that, given $0<s\leqslant d$,
\begin{equation}\label{dyadicmeasure}
\Mm^s_{\mu,\infty}(Q)=\mu(Q)^{s/d}
\end{equation}
holds for all dyadic cubes $Q$. This follows from the $\sigma$-additivity of $\mu$ and the sub-additivity of the function $x\mapsto x^\tau$ when $\tau\in(0,1]$.

\subsection{Ahlfors regular spaces}\label{Ahlforsspaces}
An Ahlfors regular space is a metric space $X$ with a Borel measure $\mu$ for which there exist two constants $d>0$ and $\A>0$ such that
\begin{equation}\label{ahl}
\A^{-1}r^d\leqslant\mu(B(x,r))\leqslant\A r^d
\end{equation}
holds for all $0<r\leqslant\diam(X)$ and $x\in X$. The constant $d$ is sometimes referred as the {\it dimension} of $X$ and indeed it coincides with the Hausdorff dimension as defined in $\S$\ref{haus}. Moreover, it can be proved that $C^{-1}\Hh^d\leqslant\mu\leqslant C\Hh^d$ for some constant $C>0$, see e.g. \cite[$\S8.7$]{H}. In fact, this translates to the equivalence between any Hausdorff measure and its net measure counterpart:

%



\begin{lemma}\label{EquivMH}
Let $(X,\mu)$ be an Ahlfors regular metric space of dimension $d$, then given $0<s\leqslant d$ there exists a constant $\C_0>0$ such that
\begin{equation*}
\C_0^{-1}\Hh^s(E)\leqslant\Mm^s_\mu(E)\leqslant\C_0\Hh^s(E),
\end{equation*}
holds for all $E\subset X$. The same inequality holds for the contents $\Mm^s_{\mu,\infty}$ and $\Hh^s_\infty$.
\end{lemma}

\begin{proof}
Let $\Qq$ be a dyadic decomposition with parameter $\delta\in (0,1)$ and quasi-ball constant $\K$. By slightly modifying $\A$ in \eqref{ahl}, we have that if $Q\in\Qq$ has level $\ell$ with $\delta^{\ell+1}\leqslant\diam(X)$, then
\begin{gather}
\label{diamQ}\diam(Q)\leqslant2\K\delta^\ell
\\
\label{volQ}\A^{-1}\delta^{\ell d}\leqslant\mu(Q)\leqslant\A\K^d\delta^{\ell d}.    
\end{gather}

Now take an arbitrary subset $E\subset X$ together with a dyadic covering $\{Q_n\}$ with $\mu(Q_n)\leqslant r$ for all $n$. We can assume that $\delta^{\lev(Q_n)+1}\leqslant \diam(X)$ for every $n$. Using \eqref{diamQ} and the lower bound of \eqref{volQ} we see that 
\begin{equation*}
\diam(Q_n)\leqslant2\K\A^{1/d}\mu(Q_n)^{1/d}\leqslant2\K\A^{1/d}r^{1/d}
\end{equation*}
holds for all $n$. Thus, if we set $r':=2\K\A^{1/d}r^{1/d}$ we have that
\begin{equation*}
\Hh^s_{r'}(E)\leqslant\sum_n\mathrm{diam}(Q_n)^s\leqslant2^s\K^s\A^{s/d}\sum_n\mu(Q_n)^{s/d}.
\end{equation*}
Taking infimum among all dyadic coverings $\{Q_n\}$ and then letting $r\to0$ we obtain $\Hh^s(E)\leqslant2^s\K^s\A^{s/d}\Mm^s_{\mu}(E)$. If we fix $r=\infty$, then $\Hh^s_{\infty}(E)\leqslant2^s\K^s\A^{s/d}\Mm^s_{\mu,\infty}(E)$.

Next, consider a covering $\{U_n\}$ of $E$ by sets with diameter smaller than $r$. For each $n$ let $\ell_n\in\Z$ such that 
\begin{equation}\label{aprox}
\K\delta^{\ell_n}\leqslant \mathrm{diam}(U_n)\leqslant\K\delta^{\ell_n-1},
\end{equation}
and cover $U_n$ with dyadic cubes of $\Qq$ with level $\ell_n$, denoted by $Q_{n,1},\dots,Q_{n,m_n}$. Observe that, by Lemma \ref{lemaCC}, $m_n\leqslant H(\K/\delta)$ for every $n$. Using \eqref{aprox} and the upper bound of \eqref{volQ} we have 
\begin{equation*}
\mu(Q_{n,i})\leqslant \A\diam(U_n)^d\leqslant\A r^d
\end{equation*}
for all $n$ and $i=1,\dots,m_n$. Denoting $r''=\A r^d$ we see that
\begin{equation*}
\Mm_{\mu,r''}^s(E)\leqslant\sum_n\sum_i\mu(Q_{n,i})^{s/d}\leqslant H(\K/\delta)\sum_n\diam(U_n)^s.
\end{equation*}
Taking infinum among the coverings $\{U_n\}$ and then letting $r\to 0$, we get $\Mm^s_\mu(E)\leqslant H(\K/\delta)\Hh^s(E)$. If we fix $r=\infty$, then $\Mm^s_{\mu,\infty}(E)\leqslant H(\K/\delta)\Hh^s_\infty(E)$.
\end{proof}
\subsection{$\alpha$-approximable points}\label{epsnet}
Let $X$ be a metric space and $\epsilon,c_1,C_1>0$. We say that a discrete subset $\{x_k\}\subset X$ is a $(c_1,C_1,\epsilon)$-net if
\begin{equation*}
\abs{x_k-x_{k'}}>2c_1\eps\quad k\neq k',\qquad\inf_k\abs{x_k-x}<C_1\eps\quad\forall x\in X.
\end{equation*}
The first condition says that the balls centered at points in $\{x_k\}$  with radius $c_1\epsilon$ are pairwise disjoint, and the second condition implies that the family of balls taken with the same centers but with radius $C_1\epsilon$ is a covering of $X$. Observe that if $C_1\geqslant 2c_1$, then one can use Zorn's lemma to construct $(c_1,C_1,\epsilon)$-nets on $X$.

Let $\alpha>1$ and $\Ee=\{\epsilon_j\}$ be a positive sequence with $\epsilon_j\to0$. For each $j$, let $\{x_{j,k}\}_k$ be a $(c_1,C_1,\eps_j)$-net, consider
\begin{equation*}
E_j:=\bigcup_kB(x_{j,k},\eps_j^\alpha),
\end{equation*}
and then define the set of $(\Ee,\alpha)$-approximable points as
\begin{equation*}
F:=\limsup E_j=\bigcap_j\bigcup_{j'\geqslant j}E_{j'}.
\end{equation*}

If $X$ is doubling then $F$ can also be defined as the set of points $y\in X$ such that
\begin{equation*}
\abs{x_{j,k}-y}\leqslant\epsilon_j^\alpha
\end{equation*}
holds for infinitely many points $x_{j,k}$, which is the same condition defining  Diophatine approximations.

\section{Characterization of $\Jj^s_\mu$-sets}
We begin by proving the equivalence provided by Theorem \ref{teoB}. Throughout this section we will assume that $(X,\mu)$ is a metric space of homogeneous type and that $\C>0,\beta>0$ are constants that satisfy \eqref{homogeneo}. Further, we will take a dyadic decomposition $\Qq$ with parameter $\delta\in(0,1)$ and quasi-ball constant $\K$ as in \eqref{balls}.

\begin{proof}[Proof of Theorem \ref{teoB}]
Observe that once the theorem is proved for $\Jj^s_\mu$-sets, it automatically follows for $\Gg^s_\mu$-sets. Let us then prove the equivalences for $\Jj^s_\mu$-sets. 

Note that $(ii)\to(iii)$ is immediate by taking $c=1$. The direction $(iii)\to(iv)$ follows from the fact that 
\begin{equation}\label{cubeint}
\Mm^s_{\mu,\infty}(\inte(Q))\geqslant c_0\Mm^s_{\mu,\infty}(Q)
\end{equation}
for all dyadic cubes $Q$, where $c_0>0$ is an independent constant. Indeed, if \eqref{cubeint} holds for all $Q$, then for every $F$ verifying $(iii)$ we can find a constant $c>0$ such that 
\begin{equation*}
\Mm^s_{\mu,\infty}(F\cap Q)\geqslant\Mm^s_{\mu,\infty}(F\cap\inte(Q))\geqslant c\Mm^s_{\mu,\infty}(\inte(Q))\geqslant c_0c\Mm^s_{\mu,\infty}(Q)
\end{equation*}
for all dyadic cubes $Q$. Now, to see \eqref{cubeint} note that if $j$ is the level of $Q$ and $\{Q_n\}$ is a family of dyadic cubes covering $\inte(Q)$, then
\begin{align*}
\sum_n\mu(Q_n)^{s/d}\geqslant \mu(\inte(Q))^{s/d}&\geqslant\mu(B(x_Q,\delta^j/2))^{s/d}
\\
&\geqslant\C^{-s/d}(2\K)^{-\beta s/d}\mu(B(x_Q,\K\delta^j))^{s/d}
\\
&\geqslant\C^{-s/d}(2\K)^{-\beta s/d}\mu(Q)^{s/d}
\\
&\geqslant\C^{-s/d}(2\K)^{-\beta s/d}\Mm^s_{\mu,\infty}(Q),
\end{align*}
where $x_Q\in Q$ verifies \eqref{balls} and where we have also used \eqref{homogeneo} in the third inequality and \eqref{dyadicmeasure} in the last one. Finally, \eqref{cubeint} follows by taking $c_0=\C^{-s}(2\K)^{-\beta s}$.

\medskip

Let us show $(i)\to(ii)$. Here we essentially reproduce \cite[Lemma 1]{F3} to our current setting. Assume that $F\subset X$ verifies $(i)$ and take an open subset $U\subset X$. We need to show that for every arbitrary dyadic covering $\{P_m\}$ of $F\cap U$ we have
\begin{equation*}
\sum_m\mu(P_m)^{s/d}\geqslant\Mm_{\mu,\infty}^s(U).
\end{equation*}
First, due to \eqref{disj}, we may assume that $\{P_m\}$ is disjoint.  Second, since $U$ is we can write $U=\bigcup_n Q_n$ where $\{Q_n\}$ is a family of disjoint dyadic cubes. Thus
\begin{equation*}
\bigcup_n F\cap Q_n\subset\bigcup_m P_m.
\end{equation*}
From here, and since $F\cap Q\neq\emptyset$ for any dyadic cube $Q$, we may assume, using the nested property \eqref{disj}, that for each $Q_n$ one (and only one) of the following is true:
\begin{enumerate}[(a)]
\item there exists a unique $m$ such that 
\begin{equation*}
Q_n\subset P_m,
\end{equation*}

\item $Q_n$ is not included in any $P_m$ and instead
\begin{equation*}
F\cap Q_n\subset\bigcup_{P_m\subset Q_n}P_m.
\end{equation*}
\end{enumerate}

If $Q_n$ verifies (a), then clearly
\begin{equation*}
\sum_{P_m\cap Q_n\neq\emptyset}\mu(P_m)^{s/d}\geqslant\mu(Q_n)^{s/d}.
\end{equation*}
If $Q_n$ verifies (b), then
\begin{equation*}
\sum_{P_m\cap Q_n\neq\emptyset}\mu(P_m)^{s/d}\geqslant\Mm^s_{\mu,\infty}(F\cap Q_n)=\Mm^s_{\mu,\infty}(Q_n)=\mu(Q_n)^{s/d},
\end{equation*}
since we are assuming that $F$ verifies $(i)$ and by also using \eqref{dyadicmeasure} in the last equality. Thus, changing all those $P_m$ that are proper subcubes of some $Q_n$ by one copy of $Q_n$, and keeping the rest of the cubes $P_m$, we obtain a dyadic covering $\{K_l\}$ of $U$ such that
\begin{equation*}
\sum_m\mu(P_m)^{s/d}\geqslant\sum_l\mu(K_l)^{s/d}\geqslant\Mm^s_{\mu,\infty}(U),
\end{equation*}
which gives us exactly what we wanted to prove.

\medskip

Finally, let us show that $(iv)\to(i)$. Here we adapt the ideas of \cite[Lemma 2]{FP}. Assume that $F$ verifies $(iv)$ and let $Q$ be an arbitrary dyadic cube. We want to show that for any arbitrary dyadic covering $\{Q_n\}$ of $F\cap Q$ we have
\begin{equation*}
\sum_n\mu(Q_n)^{s/d}\geqslant\Mm^s_{\mu,\infty}(Q).
\end{equation*}
As before, we can suppose that this covering is disjoint and further that $Q_n\subset Q$ for all $n$. Moreover, since $\Mm^s_{\mu,\infty}(F\cap Q)$ is finite (e.g. it can be covered by just $Q$) we may assume that 
\begin{equation*}
\sum_n\mu(Q_n)^{s/d}<\infty.
\end{equation*}
Next, as the series is convergent, we can reorganize the sum according to the level of each cube: for each $j\geqslant \lev(Q)$, denote $L_j:=\{n:\lev(Q_n)=j\}$ so that
\begin{equation*}
\sum_n\mu(Q_n)^{s/d}=\sum_{j}\sum_{n\in L_j}\mu(Q_n)^{s/d}.
\end{equation*}
Moreover, since $Q_n\subset Q$ for all $n$ and the subcubes are all disjoint, then an application of Lemma \ref{lemaCC} shows that $L_j$ is finite for any $j$. Altogether this means that, given $\eps>0$, we can find $j_0$ such that
\begin{equation}\label{colaserie}
\sum_{j\geqslant j_0}\sum_{n\in L_j}\mu(Q_n)^{s/d}
<\eps.
\end{equation}
We then construct the following dyadic partition of $Q$: let $\{P_m\}$ such that for each $m$ either
\begin{enumerate}
\item[(c)] $P_m=Q_n$ for some $n$ and $\lev(P_m)<j_0$,

\item[(d)] $\lev(P_m)={j_0}$ and all the $Q_n$ that intersect $F\cap P_m$ are contained in $P_m$.
\end{enumerate}
(This can be done from a $j_0$-level partition of dyadic subcubes of $Q$.)
Let us first take $m_0$ such that $P_{m_0}$ satisfies (c) so that
\begin{equation*}
\sum_{Q_n\subset P_{m_0}}\mu(Q_n)^{s/d}=\mu(P_{m_0})^{s/d}.
\end{equation*}
In particular this means that
\begin{equation}\label{caso1}
\sum_{j<j_0}\sum_{n\in L_j}\mu(Q_n)^{s/d}=\sum_{\lev(P_m)<j_0}\mu(P_m)^{s/d}.
\end{equation}
If now $P_{m_0}$ satisfies (d) we then have that
\begin{equation*}
\sum_{Q_n\subset P_{m_0}}\mu(Q_n)^{s/d}\geqslant\Mm^s_{\mu,\infty}(F\cap P_{m_0})\geqslant c\Mm^s_{\mu,\infty}(P_{m_0})=c\mu(P_{m_0})^{s/d}
\end{equation*}
by using the fact that $F$ verifies $(iv)$ in the last inequality. This gives
\begin{equation}\label{caso2}
\sum_{j\geqslant j_0}\sum_{n\in L_j}\mu(Q_n)^{s/d}\geqslant c\sum_{\lev(P_m)=j_0}\mu(P_m)^{s/d}.
\end{equation}
Altogether \eqref{colaserie}, \eqref{caso1} and \eqref{caso2} yield
\begin{align*}
\sum_j\sum_{n\in L_j}\mu(Q_n)^{s/d} &=\sum_{j<j_0}\sum_{n\in L_j}\mu(Q_n)^{s/d}+\sum_{j\geqslant j_0}\sum_{n\in L_j}\mu(Q_n)^{s/d}
\\
&=\sum_{\lev(P_m)<j_0}\mu(P_m)^{s/d}+(c^{-1}+1-c^{-1})\sum_{j\geqslant j_0}\sum_{n\in L_j}\mu(Q_n)^{s/d}
\\
&\geqslant\sum_{\lev(P_m)<j_0}\mu(P_m)^{s/d}+\sum_{\lev(P_m)=j_0}\mu(P_m)^{s/d}+(1-c^{-1})\eps
\\
&\geqslant\Mm^s_{\mu,\infty}(Q)+(1-c^{-1})\eps,
\end{align*}
by also using that $\{P_m\}$ is a dyadic partition of $Q$ with cubes of level at most $j_0$ in the last inequality. Finally, since $\eps>0$ is arbitrary we conclude that $\sum_n\mu(Q_n)^{s/d}\geqslant\Mm^s_{\mu,\infty}(Q)$.
\end{proof}

To end this section let us note that characterizations $(iii)$ or $(iv)$ of Theorem \ref{teoB} together with Proposition \ref{equivD} show that Definition \ref{jsets} is independent from the chosen dyadic partition. 

\section{Properties of $\Jj^s_\mu$-sets}

We begin by showing different properties of the classes $\Jj^s_\mu$ and $\Gg^s_\mu$ provided by Theorem \ref{teoC}. Here we take the same assumptions and use the same notation than in the previous section.

\begin{proof}[Proof of Theorem \ref{teoC}]
As in Theorem \ref{teoB} it is enough to show the results for the class $\Jj^s_\mu$.

$(i)$ If $t\leqslant s$, then for all families of dyadic cubes $\{Q_n\}$ we have
\begin{equation*}
\sum_n\mu(Q_n)^{t/d}\geqslant\left(\sum_n\mu(Q_n)^{s/d}\right)^{t/s},
\end{equation*}
which yields $\Mm^t_{\mu,\infty}(E)\geqslant\left(\Mm^s_{\mu,\infty}(E)\right)^{t/s}$ for all subset $E\subset X$. Together with \eqref{dyadicmeasure} this implies $\Jj^s_\mu\subset\Jj^t_\mu$.

\medskip

$(ii)$ This is obvious from the definition of $\Jj^s_\mu$-set and the fact that $\Mm^s_\mu$ is an outer measure.

\medskip

\medskip

$(iii)$ Here we adapt Lemma 4 of \cite{F3}. Let $\{F_j\}_{j\geqslant1}$ be a sequence of sets belonging to the class $\Jj_\mu^s$. We will prove that $\bigcap_j F_j\in \Jj^s_\mu$.

We may assume that $\{F_j\}_{j\geqslant1}$ is a decreasing sequence of open sets. To see this suppose that $\{F_j\}_{j\geqslant1}$ is arbitrary and use the $G_\delta$ property to write each $F_j$ as the countable intersection of open sets. By $(ii)$ of this Theorem we know that each of those open sets must belong to $\Jj^s_\mu$, and due to $(ii)$ of Theorem \ref{teoB} the finite intersection of open $\Jj^s_\mu$-sets belongs to $\Jj^s_\mu$. Then a diagonal argument allows us to write $\bigcap_{j\geqslant1}F_j$ as the decreasing intersection of open $\Jj^s_\mu$-sets.

Next, take an open subset $U\subset X$. Suppose first that $U$ is bounded and let $\eps>0$. We will construct a decreasing sequence of open sets $\{U_j\}_{j\geqslant1}$ such that for each $j\geqslant1$
\begin{gather}
\label{clau}\overline{U_j}\subset F_j\cap U
\\
\label{approxbyU}\Mm^s_{\mu,\infty}(U_j)>\Mm^s_{\mu,\infty}(U)-\eps.
\end{gather}
To that end, given a sequence of positive real numbers $\{r_j\}$ we set
\begin{equation*}
U_0=U,\qquad U_j=(F_j\cap U_{j-1})_{(-r_j)}\quad\text{for }j\geqslant1,
\end{equation*}
where for any given subset $S\subset X$ and $r>0$ we denote $S_{(-r)}:=\{x\in S:\inf_{y\notin S}\abs{x-y}>r\}$ (which is always open). Note first $U_j$ verifies \eqref{clau} for any $j\geqslant1$. Secondly, we claim that if the sequence $\{r_j\}$ is chosen appropriately, then \eqref{approxbyU} is also met. Indeed if $U_{j-1}$ verifies \eqref{approxbyU}, then $(ii)$ of Theorem \ref{teoB} applied to $F_j$ gives
\begin{equation*}
\Mm^s_{\mu,\infty}(F_j\cap U_{j-1})=\Mm^s_{\mu,\infty}(U_{j-1})>\Mm^s_{\mu,\infty}(U)-\eps.
\end{equation*}
Finally, as $(F_j\cap U_{j-1})_{(-r)}\nearrow F_j\cap U_{j-1}$ when $r\searrow0$, the Increasing Sets Lemma for net measures \cite[Theorem 52]{R} assures the existence of a small enough $r_j>0$ such that $U_j=(F_j\cap U_{j-1})_{(-r_j)}$ verifies \eqref{approxbyU}.

Using \eqref{clau} we see that if $\{Q_n\}$ is a dyadic covering of $\bigcap_{j\geqslant 1}F_j\cap U$ then
\begin{equation*}
\bigcap_{j\geqslant1}\overline{U}_j\subset\bigcup_nQ_n\subset\bigcup_nB(x_n,\K\delta^{j_n}),
\end{equation*}
where $j_n$ denotes the level of $Q_n$ and $B(x_n,\K\delta^{j_n})$ is the ball on the right hand side of \eqref{balls}. Since $X$ is complete and $\overline{U_j}$ is closed and bounded for all $j$, then Lemma \ref{seqcomp} implies that it must be compact. Further, since the sequence $\{\overline{U}_j\}$ is decreasing, there exits $j_1\geqslant1$ for which $\overline{U}_{j_1}\subset\bigcup_nB(x_n,\K\delta^{j_n})$. 
Finally, for each $n$ let $\{Q_{n,m}\}_m$ the family of cubes with level $j_n$ that intersect the ball $B(x_n,\K\delta^{j_n})$ (which clearly contains $Q_n$). Thus $\overline{U}_{j_1}\subset\bigcup_n\bigcup_mQ_{n,m}$. In particular this means that
\begin{equation*}
\sum_n\sum_m\mu(Q_{n,m})^{s/d}\geqslant\Mm^s_{\mu,\infty}(U_{j_1}).
\end{equation*}
Now, we apply Lemma \ref{lemaCC} by comparing each $Q_{n,m}$ to $Q_n$ to obtain
\begin{equation*}
\sum_n\sum_m\mu(Q_{n,m})^{s/d}\leqslant\sum_n H(\K\delta^{j_n}/\delta^{j_n})^{s/d}\mu(Q_n)^{s/d}\leqslant H(\K)^{s/d}\sum_n\mu(Q_n)^{s/d}
\end{equation*}
Altogether, taking $c=H(\K)^{-s}$ and using \eqref{approxbyU}, the previous inequalities read
\begin{equation*}
\sum_n\mu(Q_n)^{s/d}\geqslant c\Mm^s_{\mu,\infty}(U_j)>c(\Mm^s_{\mu,\infty}(U)-\eps).
\end{equation*}
Finally, since $\{Q_n\}$ and $\eps>0$ are arbitrary we get
\begin{equation*}
\Mm^s_{\mu,\infty}\left(\bigcap_{j\geqslant1}F_j\cap U\right)\geqslant c\Mm^s_{\mu,\infty}(U),
\end{equation*}

If $U$ is unbounded one can use again the Increasing Sets Lemma to obtain the same inequality. Therefore, in any case, $(iii)$ of Theorem \ref{teoB} is verified for $\bigcap_{j\geqslant1}F_j$ and so $\bigcap_{j\geqslant1}F_j\in\Jj^s_\mu$.
\end{proof}

We now proceed to prove Theorem \ref{teoCAl}. Here we are assuming in addition that $(X,\mu)$ is Ahlfors regular with dimension $d$.

\begin{proof}[Proof of Theorem \ref{teoCAl}]
$(i)$ Using Lemma \ref{EquivMH} we have that if $F\in \Jj_\mu^s$, then
\begin{align*}
\Hh^s_\infty(F)\geqslant\C_0^{-1}\Mm^s_{\mu,\infty}(F)
&\geqslant\C_0^{-1}\Mm^s_{\mu,\infty}(F\cap Q)
\\
&\geqslant\C_0^{-1}\Mm^s_{\mu,\infty}(Q)=\C_0^{-1}\mu(Q)^{s/d}>0
\end{align*}
by using $(i)$ of Theorem \ref{teoB} in the third equality for an arbitrary dyadic cube $Q$.

\medskip

$(ii)$ Suppose that $F\in\Jj^s_\mu(X)$. By $(iii)$ of Theorem \ref{teoB} and Lemma \ref{EquivMH}, to prove that $f(F)\in\Jj^s_\nu(Y)$ it is enough to see that 
\begin{equation*}
\Hh^s_{Y,\infty}(f(F)\cap U)\geqslant c\Hh^s_{Y,\infty}(U)
\end{equation*}
holds for some constant $c>0$ and every open set $U\subset Y$.

Since $f$ is bi-Lipschitz, there exists $c_f>0$ such that 
\begin{equation*}
c_f^{-1}\Hh^s_{Y,\infty}(f(E))\leqslant\Hh^s_{X,\infty}(E)\leqslant c_f\Hh^s_{Y,\infty}(f(E))
\end{equation*}
holds for every subset $E\subset X$. Then, using again $(iii)$ of Theorem \ref{teoB} and Lemma \ref{EquivMH}, we have
\begin{align*}
\Hh_{Y,\infty}^s(f(F)\cap U)&\geqslant c_f^{-1}\Hh_{X,\infty}^s(F\cap f^{-1}(U))
\\
&\geqslant\C_1^{-1}c_f^{-1}\Mm^s_\mu(F\cap f^{-1}(U))
\\
&\geqslant\C_1^{-1}c_f^{-1}\Mm^s_\mu(f^{-1}(U))
\\
&\geqslant\C_1^{-2}c_f^{-1}\Hh_{X,\infty}^s(f^{-1}(U))
\\
&\geqslant\C_1^{-2}c_f^{-2}\Hh_{Y,\infty}^s(U)
\end{align*}
where $\C_1$ is the maximum of the $\C_0$'s of Lemma \ref{EquivMH} applied for $X$ and $Y$, and where $U\subset Y$ is an arbitrary open set. The other implication is analogous taking the function $f^{-1}$.
\end{proof}

Note that for the proof of $(i)$ we only use the right hand side of the inequality in Lemma \ref{EquivMH} and therefore the result applies to any space $X$ that verifies the right hand side of \eqref{ahl}. In fact, in \cite{P} this hypothesis is assumed to estimate dimensions of different types of sets.

\section{$\alpha$-approximable points}\label{SecAproximables}

Throughout this section we work with the same construction laid out in $\S\ref{epsnet}$: let $\alpha>1$, and for each $j$ let $\{x_{j,k}\}_k$ be a $(c_1,C_1,\eps_j)$-net, where $\eps_j\to0$, and set
\begin{equation*}
E_j:=\bigcup_kB(x_{j,k},\epsilon_j),\qquad F:=\limsup E_j.
\end{equation*}

The following property of approxiamble points is key in the first part of Theorem \ref{teoD}. It uses the strategies of \cite[Example 8.9]{F2}.

\begin{lemma}\label{LemaAprox}
For every $0<t<d/\alpha$ there exists a constant $c>0$ such that  
\begin{equation*}
\limsup_{j\to\infty}\Mm^t_{\mu,\infty}(Q\cap E_j)\geqslant c \Mm^t_{\mu,\infty}(Q),
\end{equation*}
for every dyadic cube $Q$.
\end{lemma}

In order to prove the previous lemma we will need the following result. 

\begin{lemma}\label{lemita}
Let $D$ be a $K$-quasi-ball of radius $R\leqslant \diam(X)$. Consider 
\begin{gather*}
\Ff_j(D):=\{k:B(x_{j,k},c_1\eps_j)\subset D\},
\\
\Ff^j(D):=\{k:B(x_{j,k},c_1\eps_j)\cap D\neq\emptyset\},
\end{gather*}
and let $n_j(D)=\#\Ff_j(D)$, $m_j(D)=\#\Ff^j(D)$.
Then there exist $j_0$ and a constant $C_2\geqslant 1$ such that
\begin{equation}\label{nkmk}
C_2^{-1}\eps_j^{-d}(R-(c_1+C_1)\eps_j)^d\leqslant n_j(D)\leqslant m_j(D)\leqslant C_2\eps_j^{-d}(KR+2c_1\eps_j)^d
\end{equation}
holds for all $j\geqslant j_0$.
\end{lemma}

\begin{proof}
Let $x_0\in X$ such that
\begin{equation}\label{CB}
B(x_0,R)\subset D\subset B(x_0,KR).
\end{equation}
To begin with, since $\epsilon_j\to 0$, we can take $j_0$ large enough so that $R-(c_1+C_1)\epsilon_j>0$ for all $j\geqslant j_0$. Remember that the family of balls $B(x_{j,k},C_1\eps_j)$ covers $X$ and observe that if $k\notin\Ff_j(D)$ then the triangle inequality and the left hand side of \eqref{CB} reads $B(x_{j,k},C_1\epsilon_j)\cap B(x_0,R-(c_1+C_1)\epsilon_j)=\emptyset$. Altogether this implies
\begin{equation*}
B(x_0,R-(c_1+C_1)\eps_j)\subset\bigcup_{k\in\Ff_j(D)}B(x_{j,k},C_1\eps_j).
\end{equation*}
Then, using \eqref{ahl} we have
\begin{equation*}
\A^{-1}(R-(c_1+C_1)\eps_j)^d\leqslant\mu(B(x_0,R-(c_1+C_1)\eps_j))\leqslant\sum_{k\in\Ff_j(D)}\mu\left(B(x_{j,k},C_1\eps_j)\right)\leqslant n_j(D)\A C_1^d\eps_j^d,
\end{equation*}
from where
\begin{equation*}
\A^{-2}C_1^{-d}\eps_j^{-d}(R-(c_1+C_1)\eps_j)^d\leqslant n_j(D).
\end{equation*}

For the last inequality of \eqref{nkmk} note that the balls $B(x_{j,k},c_1\eps_j)$ with $k\in\Ff^j(D)$ are all included in $B(x_0,KR+2c_1\eps_j)$, cf. \eqref{CB}. And since $\{x_{j,k}\}_k$ is a $(c_1,C_1,\eps_j)$-net, then those balls are pairwise disjoint. This yields
\begin{equation*}
m_j(D)\A^{-1}c_1^d\eps_j^d \leqslant \sum_{k\in\Ff^j(D)}\mu(B(x_{j,k},c_1\eps_j))\leqslant\mu(B(x_0,KR+2c_1\eps_j))\leqslant\A(KR+2c_1\eps_j)^d.
\end{equation*}
This proof finishes by taking $C_2=\max\{\A^2c_1^{-d},\A^2C_1^{d}\}$.
\end{proof}

Now we prove Lemma \ref{LemaAprox}.

\begin{proof}[Proof of Lemma \ref{LemaAprox}]
Let $Q$ be an arbitrary dyadic cube of level $\ell$, $R=\min\{\delta^{\ell},\diam(X)\}$ and $x_0\in Q$ such that  
\begin{equation}\label{D}
B(x_0,R)\subset Q\subset B(x_0,\K R).
\end{equation}

From now on we fix $j_0\in\N$ so that \eqref{nkmk} is verified for  $j\geqslant j_0$ when we put $D=Q$ and $K=\K$. Furthermore, since $\eps_j\to0$ when $j\to\infty$ and $\alpha>1$, we may also assume that $\eps_j^\alpha\leqslant\frac{1}{2}c_1\eps_j$ and $(c_1+C_1)\eps_j\leqslant\frac{1}{2}R$ for all $j\geqslant j_0$. For each $j\geqslant j_0$ we consider $\nu_j$ the probability measure that distributes the mass among the balls of the family
\begin{equation*}
\mathcal{S}_j(Q)=\{B(x_{j,k},\eps_j^\alpha):k\in\Ff_j(Q)\}.
\end{equation*}
That is
\begin{equation*}
\nu_j(A\cap B)=\frac{\mu(A\cap B)}{\mu(B)n_j(Q)}
\end{equation*}
for every $B\in\mathcal{S}_j(Q)$.

\medskip

{\it Claim}: Let  $\varepsilon>0$. For every $j\geqslant j_0$ there exists $\{Q_n\}$ a dyadic covering of $Q\cap E_j$ that satisfies
\begin{equation}\label{claim}
\sum_n \mu(Q_n)^{t/d}\leqslant H(2\K/\delta)\left(\Mm_{\mu,\infty}^t(Q\cap E_j)+\varepsilon\right)    
\end{equation}
and for each $\ell_n=\lev(Q_n)$ either
\begin{enumerate}
\item[(a)] $2\K\delta^{\ell_n}\geqslant c_1\epsilon_j$, or
\item[(b)] $2\K\delta^{\ell_n}\leqslant \epsilon_j^\alpha$.
\end{enumerate}
In order to prove the claim let $\{P_m\}$ be a dyadic covering of $Q\cap E_j$ such that 
\begin{equation*}
\sum_m\mu(P_m)^{t/d}\leqslant \Mm_{\mu,\infty}^t(Q\cap E_j)+\varepsilon.
\end{equation*}
Next, let $p,q\in\N$ such that 
\begin{gather*}
2\K\delta^p\leqslant\epsilon_j^\alpha<2\K\delta^{p-1},
\\
2\K\delta^q< c_1\epsilon_j\leqslant 2\K\delta^{q-1}.
\end{gather*}
Suppose $P_m$ is such that $h_m=\lev(P_m)$ does not satisfies neither $(a)$ nor $(b)$ of the claim. Then we must have $q\leqslant h_m<p$ and so $\diam(P_m)\leqslant2\K\delta^{h_m}<c_1\eps_j$. Since $\epsilon_j^\alpha\leqslant \frac{1}{2}c_1\epsilon_j$ and the points of the form $x_{k,j}$ are $2c_1\epsilon_j$ separated, we have that $P_m$ intersects only one ball of $E_j$, which we denote by $B$. Let $P_{m,1},\ldots,P_{m,r_m}$ be the subcubes of $P_m$ of level $p$ that intersect $B$. By Lemma \ref{lemaCC} we have that
\begin{equation}\label{Estimacionrm}
r_m\leqslant H(\epsilon_j^\alpha/\delta^p)\leqslant H(2\K/\delta). 
\end{equation}
We define the covering $\{Q_n\}$ by changing all cubes $P_m$ as above by their respective subcubes $P_{m,1},\ldots,P_{m,r_m}$. Using that $\mu(P_{m,i})\leqslant \mu(P_{m})$ and \eqref{Estimacionrm} we get \eqref{claim} and then the claim is proved. 

\medskip

Now consider $\{Q_n\}$ as in the claim. Observe also that we can suppose that $\delta^{\ell_n+1}\leqslant R$ for every $n$. We want to estimate $\nu_j(Q_n)$ for each $n$. For that we study separately the cases given by the claim. 

If $n$ satisfies $(a)$ we can use Lemma \ref{lemita} to estimate for every $j\geqslant j_0$,
\begin{align*}
\nu_j(Q_n)\leqslant\frac{m_j(Q_n)}{n_j(Q)}&\leqslant \frac{C_2\eps_j^{-d}(\K\delta^{\ell_n}+2c_1\eps_j)^d}{C_2^{-1}\eps_j^{-d}(R-(c_1+C_1)\eps_j)^d}\\
&\leqslant C_2^2 \frac{\delta^{\ell_nt}}{R^t}\left(\frac{R^{t/d}(\K\delta^{\ell_n}+2c_1\eps_j)}{\delta^{\ell_nt/d}(R-(c_1+C_1)\eps_j)}\right)^d
\\
&
\leqslant C_2^2\frac{\delta^{\ell_nt}}{R^t}\left(\frac{\K\delta^{\ell_n(1-t/d)}R^{t/d}}{\frac{1}{2}R}+\frac{2c_1\eps_jR^{t/d}\delta^{-\ell_nt/d}}{\frac{1}{2}R}\right)^d
\\
&\leqslant2^dC_2^2\frac{\delta^{\ell_nt}}{R^t}\left(R^{t/d-1}\K\delta^{\ell_n(1-t/d)}+2^{1+t/d}R^{t/d-1}\K^{t/d}(c_1\eps_j)^{1-t/d}\right)^d,
\end{align*}
by also using $(c_1+C_1)\eps_j\leqslant\frac{1}{2}R$ in the third line and $\delta^{-\ell_n}\leqslant 2\K(c_1\epsilon_j)^{-1}$ in the last inequality. Since we also have $\delta^{\ell_n+1}\leqslant R$, we can deduce that for all $j\geqslant j_0$,
\begin{equation*}
\nu_j(Q_n)\leqslant 2^dC^2_2\frac{\delta^{\ell_nt}}{R^t} \left(\K\delta^{t/d-1}+2^{1+t/d}R^{(t/d-1)}\K^{t/d}(c_1\eps_j)^{1-t/d}\right)^d. 
\end{equation*}

\medskip

If $n$ satisfies $(b)$, then $Q_n$ intersects at most one ball $B(x,\eps_j^\alpha)$ in  $E_j$. We can use the that $X$ is Ahlfors-regular and Lemma \ref{lemita} in order to obtain, for every $j\geqslant j_0$,
\begin{align*}
\nu_j(Q_n)&\leqslant \frac{\mu(Q_n)}{\mu(B(x,\eps_j^\alpha))n_j(Q)}\leqslant \frac{\A \K^d\delta^{\ell_nd}}{\A^{-1}\eps_j^{(\alpha-1)d}C_2^{-1}(R-(c_1+C_1)\eps_j)^d}
\\
&\leqslant\A^2C_2\K^t\frac{\delta^{\ell_nt}}{R^t}\left(\frac{\eps_j^{1-\alpha}(2\K)^{1-t/d}\delta^{\ell_n(1-t/d)}R^{t/d}}{\frac{1}{2}R}\right)^d\\
&\leqslant 2^{d}\A^2C_2\K^t\frac{\delta^{\ell_nt}}{R^{t}} \left(\eps_j^{1-\alpha t/d}R^{t/d-1}\right)^d
\end{align*}    
by using again $(c_1+C_1)\eps_j\leqslant\frac{1}{2}R$.

\medskip

Now note that in any case, since $\alpha t<d$, then for every $\eta>0$ there exists $j_1\geqslant j_0$ such that if $j\geqslant j_1$, 
\begin{equation*}
\nu_j(Q_n)\leqslant(C_3+\eta)^d\frac{\delta^{\ell_nt}}{R^t}
\end{equation*}
holds for all $n$, where $C_3$ is a positive constant. Running the sum over all $n$ this yields
\begin{equation*}
1=\nu_j(Q\cap E_j)\leqslant\sum_n\nu_j(Q_n)\leqslant(C_3+\eta)^dR^{-t}\sum_n \delta^{\ell_nt}\leqslant\A(C_3+\eta)^dR^{-t}\sum_n\mu(Q_n)^{t/d},
\end{equation*}
where in the last inequality we use that $\delta^{\ell_n+1}\leqslant \diam(X)$ and \eqref{ahl} eventually modifying the constant $\A$.
Therefore
\begin{equation*}
R^t\leqslant 2\A (C_3+\eta)^{d} H(2\K/\delta)\left(\Mm_{\mu,\infty}^t(Q\cap E_j)+\varepsilon\right).
\end{equation*}
Taking $\limsup$ when $j\to\infty$ we obtain 
\begin{equation*}
\Mm^t_{\mu,\infty}(Q)=\mu(Q)^{t/d}\leqslant\A\K^tR^t\leqslant \A^2\K^tC_3^dH(2\K/\delta)\left(\limsup_{j\to\infty}\Mm_{\mu,\infty}^t(Q\cap E_j)+\varepsilon\right)
\end{equation*}
Since $\varepsilon$ is arbitrarily small, the proof finishes by putting $c=\left(\A^2\K^tC_3^dH(2\K/\delta)\right)^{-1}$.
\end{proof}

Before proceeding to prove Theorem \ref{teoD} let us precise what we mean when we say that the sequence of positive real numbers $\Ee=\{\eps_j\}$ has {\it exponential decay}, this is:
\begin{equation*}
\limsup_{j\to\infty}\frac{\log(\eps_j)}{j}<0.
\end{equation*}

\begin{proof}[Proof of Theorem \ref{teoD}]
Fix $t<d/\alpha$. By Lemma \ref{LemaAprox} there exists a constant $c\in(0,1)$ such that
\begin{equation*}
\limsup_{j\to\infty}\Mm^t_{\mu,\infty}(Q\cap E_j)\geqslant c \Mm^t_{\mu,\infty}(Q)
\end{equation*}
holds for every dyadic cube $Q$. Therefore, for every $j$ and every dyadic cube $Q$ we have
\begin{equation*}
\Mm^t_{\mu,\infty}\left(\left(\bigcup_{j'\geqslant j}E_{j'}\right)\cup Q\right)\geqslant \sup_{j'\geqslant j}\Mm^t_{\mu,\infty}(E_{j'}\cap Q)\geqslant c\Mm^t_{\mu,\infty}(Q).
\end{equation*}
This means that $\bigcup_{j'\geqslant j}E_{j'}$ belongs to $\Jj^t_\mu$ for every $j$. Finally, $(iii)$ of Theorem \ref{teoC} allows us to conclude that $F=\bigcap_j\bigcup_{j'\geqslant j}E_{j'}\in\Jj^t_\mu$. Since $t<d/\alpha$ was arbitrary, we get that $F\in\Gg^{d/\alpha}_\mu$.

\medskip

Now suppose that $\Ee$ has exponential decay and let $\beta>0$ be such that
\begin{equation*}
\limsup_{j\to\infty} \frac{\log (\eps_j)}{j}=-\beta.
\end{equation*}
Thus there exists $j_0$ such that $\eps_j\leqslant e^{-\beta j/2}$ for every $j\geqslant j_0$.

Now take $t>d/\alpha$. Consider an arbitrary ball $B=B(x_0,R)\subset X$ and let $0<r<R$. Let $j_1\geqslant j_0$ be such that $\eps_j^\alpha\leqslant c_1\eps_j$ and $2\eps_j^\alpha\leqslant r$ for every $j\geqslant j_1$, and take the covering of $F\cap B$ that consists of all balls $B(x_{j,k},\eps_j^\alpha)$ with $j\geqslant j_1$ that intersect $B$. Then applying the right hand side of \eqref{nkmk} for $B$ and that $\eps_j\leqslant r\leqslant R$ we have
\begin{align}\label{cotasup}
\Hh_r^t(F\cap B)&\leqslant\sum_{j\geqslant j_1}\sum_{k\in\Ff^j(B)}2^t\eps_j^{\alpha t}
\\
\nonumber&\leqslant\sum_{j\geqslant j_1}m_j(B)2^t\eps_j^{\alpha t}\leqslant 2^tC_2(1+2c_1)^dR^d \sum_{j\geqslant j_1}\eps_j^{\alpha t-d}
\end{align}
Since $\eps_j\leqslant e^{-\beta j/2}$ and $t>d/\alpha$, the series $\sum\eps_j^{\alpha t-d}$ converges. Taking the limit when $j_1\to 0$ we see that $\Hh_r^t(F\cap B)=0$ and then $\Hh^t(F\cap B)=0$, which shows that $\Hh^t(F)=0$ since $B$ is any ball. Finally, as $t>d/\alpha$ is also arbitrary, we have that $\dim_\Hh F\leqslant d/\alpha$.
\end{proof}

The estimate \eqref{cotasup} shows that if $\sum\eps_j^{\alpha t-d}$ converges, then $\dim_\Hh F\leqslant t$. We can use this fact to get upper bounds for the dimension in the case of non-exponential decay. For example, if $\eps_j=j^{-t}$ with $t>0$, it is easy to see that $\dim_\Hh F\leqslant\frac{d t+1}{\alpha t}$.

\section{Comparison between $\Jj^s_\mu$, $\Gg^s_\mu$ and other previous definitions}\label{defs}

As we said in the introduction, although $\Gg^s(\R^d)=\Gg_\Ll^s(\R^d)$, for a general metric space $X$ characterizing $\Gg^s_\mu$-sets as in \eqref{gsets}:
\begin{equation}\label{bilipdef}
\dim_\Hh\bigcap_if_i^{-1}(F)\geqslant s
\end{equation}
where $f_i:V\to X$ are bi-Lipschitz maps defined on an open subset $V\subset X$, may not be the best fit since it is possible that $X$ does not have so many similarities or bi-Lipschitz maps to begin with and so the condition \eqref{bilipdef} might be weaker than that of Definition \ref{jsets}. Indeed, let us illustrate this in the following example, which lies in the Ahlfors regular context.

\begin{example}\label{ejemplopegado}
Consider $X=H\cup L\subset\R^3$, where
\begin{equation*}
H=\{x=(x_1,x_2,x_3)\in\R^3 : x_3=0\},\qquad L=\{x=(x_1,x_2,x_3)\in \R^3 : x_1=x_2=0\}.
\end{equation*}
We define the distance on $X$ by:
\begin{equation*}
\abs{x-y}=\left\{\begin{array}{cc}\norm{x-y}&\text{if }x,y\in H
\\
\norm{x-y}^{1/2}&\text{if }x,y\in L
\\
\norm{x}+\norm{y}^{1/2}&\text{if }x\in H, y\in L 
\end{array}\right.
\end{equation*}
where $\norm{\cdot}$ denotes the usual Euclidean norm on $\R^3$. The metric space $X$ is thus obtained by gluing the Ahlfors $2$-regular spaces $(H,\norm{\cdot})$ and
$(L,\norm{\cdot}^{1/2})$, then it is Ahlfors $2$-regular (see e.g. \cite[Theorem 1.29]{G}). We consider on $X$ the Hausdorff measure $\mu=\Hh^2$. By obvious topological obstructions there are not bi-Lipschitz maps that carry open sets from $H$ onto $L$, which allows us to take sets in $X$ satisfying condition \eqref{bilipdef} that are not dense in $X$.

Specifically let $F\subset L$ be a set of $\alpha$-approximable points with $\alpha=2/s$. Applying Theorem \ref{teoD} in $L$ we get that $F\in\Gg_\mu^s(L)$. Thus, by a combination of $(i)$ and $(ii)$ of Theorem \ref{teoCAl} and $(iii)$ of Theorem \ref{teoC}, $F$ also satisfies \eqref{bilipdef}. But clearly $F\notin\Gg_{\mu}^s(X)$. Notice that this can be also written in terms of the $\Jj^t_\mu$-condition.

This example also shows that the class $\Gg_\mu^s(X)$ is not maximal among those classes of sets with dimension at least $s$ that are closed by countable intersections and bi-Lipschitz homeomorphisms. Indeed, the class $\Gg_\mu^s(X)\cup\Gg_\mu^s(L)$ is strictly larger and satisfies the same conditions.

Moreover, observe that with the same construction we can find two subsets $F,E\subset X$ that verify \eqref{bilipdef} but with $\dim_\Hh(F\cap E)=0$, so the class defined by this condition is not closed under countable intersections.
\end{example}


The following example shows that the family $\Gg_\Ll^s(\R^d)$ is strictly larger than $\Jj_\Ll^s(\R^d)$. In order to simplify the notation, in what follows we write $\Gg^s=\Gg_\Ll^s(\R^d)$ and $\Jj^s=\Jj_\Ll^s(\R^d)$ .

\begin{example}\label{ejemploreal}
Consider $X=\R^d$ with the euclidean distance and Lebesgue measure $\Ll$. Let $\alpha>1$ and set $s:=d/\alpha$. Define a sequence $\alpha_j=(1+1/\sqrt{j})\alpha$. Further, for each $j$ let $\{x_{j,k}\}_k$ be a $(1,2,e^{-j})$-net and set
\begin{equation*}
E_j:=\bigcup_kB(x_{j,k},e^{-j\alpha_j}),\qquad F:=\limsup E_j.
\end{equation*}
On the one hand, if $\beta>\alpha$, then the set of $\beta$-approximable points constructed from $\{x_{j,k}\}$ is contained in $F$. Then, by Theorem \ref{teoD} for every $t=d/\beta<d/\alpha=s$ we have that $F\in\Gg^t$. Whence $F\in \Gg^s$.

On the other hand, for every ball $B=B(x_0,R)\subset\R^d$ we can compute $\Hh^s_\infty(F\cap B)$ as in the proof of Theorem \ref{teoD}. For a given $j_1$ we consider the covering of $F\cap B$ consisting of all balls of the form $B(x_{j,k},e^{-j\alpha_j})$ with $j\geqslant j_1$ that intersect $B$. Next, we take $j_1$ large enough so that $e^{-j\alpha_j}\leqslant e^{-j}<R$ for every $j\geqslant j_1$. Thus, using the notation of Lemma \ref{lemita}, we see that the balls $B(x_{j,k},e^{-j\alpha_j})$ with $k\in\Ff^j(B)=\{k:B(x_{j,k},e^{-j})\cap B\neq0\}$ and $j\geqslant j_1$ cover $F\cap B$. In sum
\begin{equation*}
\Hh^s_\infty(F\cap B)\leqslant\sum_{j\geqslant j_1}\sum_{k\in\Ff^j(B)}\left(2e^{-j\alpha_j}\right)^s\leqslant 2^s\sum_{j\geqslant j_1}m_j(B) e^{-j(d+d/\sqrt{j})},
\end{equation*}
where, as before, $m_j(B)=\#\Ff^j(B)$. Using the right hand side of \eqref{nkmk} and the fact that $e^{-j}\leqslant R$ for every $j\geqslant j_1$, we have that 
\begin{equation*}
\Hh^s_\infty(F\cap B)\leqslant2^s\sum_{j\geqslant j_1}C_2e^{jd}(R+2e^{-j})e^{-j(d+d/\sqrt{j})}\leqslant 2^s3C_2R\sum_{j\geqslant j_1}e^{-d\sqrt{j}}.
\end{equation*}
Taking the limit when $j_1\to\infty$ we conclude that $\Hh^s_\infty(F\cap B)=0$, and thus, by $(i)$ of Theorem \ref{teoCAl}, $F\notin\Jj^s$.
\end{example}

A third definition of these type of sets was given by Bugeaud in \cite{B}. The idea is to generalize \eqref{Bugeaud} considering a large class of {\it dimension functions}. A function $f:[0,+\infty)\to[0,+\infty)$ is called a dimension function if it is strictly increasing, continuous and $f(0)=0$. 

If $f$ is a dimension function we denote $\varepsilon(f)$ \footnote{Bugeaud's definition is slightly more demanding: $f$ must be concave in $[0,x]$ for all $x<\varepsilon(f)$. This, however, does not affect the definition or the results that follow, since the key property still is sub-additivity.} as the supremum of all positive numbers $x$ such that the function defined as $t\mapsto f(t^{1/d})$ is sub-additive on $[0,x]$. Then for a set $E\subset\R^d$ we define
\begin{equation*}
\Mm_\infty^f(E)=\inf\left\{\sum_nf(\diam(Q_n)):E\subset\bigcup_nQ_n, Q_n\text{ dyadic cube with }\diam(Q_n)\leqslant\varepsilon(f)\right\},
\end{equation*}
were, as in the introduction, we are considering the dyadic cubes in $\R^d$ as the family $Q_{j,k}=2^j(k+[0,1)^d)$, $k\in\Z^d$, $j\in\Z$ (or $\N$). As before, $\Mm_\infty^f$ is equivalent to the Hausdorff content $\Hh^f_\infty$ defined similarly but with any type of covering. Subbaditivity plays again a key role in the identity
\begin{equation*}
\Mm_\infty^f(Q)=f(\diam(Q))
\end{equation*}
for all cube $Q$ with $\diam(Q)\leqslant\varepsilon(f)$.

The last ingredient to define $\Gg^f$-sets is the relation of dimension functions: if $g$ is another dimension function we write $g\prec f$ when 
\begin{equation*}
g(x)/f(x)\to+\infty
\end{equation*}
monotonically when $x$ decreases to $0$ on a neighborhood of $0$. Finally we say that a $G_\delta$ subset $F\subset\R^d$ is a $\Gg^f$-set if
\begin{equation}\label{GsBugeaud}
\Mm^g_\infty(F\cap Q)=\Mm^g_\infty(Q)    
\end{equation}
holds for every dyadic cube $Q$ and every dimension function $g\prec f$. Equivalent characterizations as in Theorem \ref{teoB} also exists for the class $\Gg^f$, see \cite[Theorem 6]{B}.

One could naturally extend the definition of the class $\Gg^f$ to a metric space $X$ with a doubling measure $\mu$ by substituting $\diam{(Q_n)}$ by $\mu(Q_n)^{1/d}$ where $d=\dim_\Hh X$. In a similar way we can consider $\Jj^f_\mu$-sets for $\R^d$ or more general metric spaces by changing \eqref{GsBugeaud} to the same identity for just $f$. In which case we have
\begin{equation*}
\Gg^f_\mu(X)=\bigcap_{g\prec f}\Jj^g_\mu(X).
\end{equation*}
Moreover, it can be shown \cite[Lemma 2]{B} that $\Jj^f\subset\Gg^f$. On the other hand, when we consider Falconer's definition and the dimension function $f(x)=x^s$ we have $\Gg^{x^s}\subset\Gg^s$ since $x^t\prec x^s$ whenever $t<s$. Altogether this means that
\begin{equation*}
\Jj^s=\Jj^{x^s}\subset\Gg^{x^s}\subset\Gg^s
\end{equation*}
for $0<s\leqslant d$. And we know from Example \ref{ejemploreal} that $\Jj^s\varsubsetneq\Gg^s$. We will now show that Bugeaud's definition coincides with our definition of $\Jj^s$ in $\R^d$.

\begin{proposition}\label{GsJs}
For all $0<s\leqslant d$ we have $\Gg^{x^s}=\Jj^s$.
\end{proposition}

\begin{proof}
Since $\Jj^s\subset\Gg^{x^s}$, we only need to prove the converse inclusion. To that end we will show that if $F\notin\Jj^s$, then there exists $g\prec x^s$ such that \eqref{GsBugeaud} is not verified for $g$ and $F$.

The condition $F\notin\Jj^s$ means that there exists a dyadic cube $Q$ and a constant $c<1$ for which we can find a dyadic covering $\{Q_n\}$ of $F\cap Q$ such that
\begin{equation}\label{noC}
\sum_n\Ll(Q_n)^{s/d}<c\Ll(Q)^{s/d},    
\end{equation}
where, as before, $\Ll$ denotes the Lebesgue measure in $\R^d$. 

Since $\diam(Q)=\sqrt{d}\Ll(Q)^{1/d}$, then \eqref{noC} translates to
\begin{equation*}
\sum_n\diam(Q_n)^s<c\diam(Q)^s.
\end{equation*}
From here we will proceed to construct a dimension function $g\prec x^s$. First, we denote $x_n=\diam(Q_n)$ and $a_n=x_n^s$. Eventually reordering the cubes we may assume that the sequence $\{a_n\}$ is decreasing. Remember that, according to the cubes we consider in $\R^d$, for all $n$ we must have $x_n=\sqrt{d}2^{m_n}$ for some $m_n\in\Z$. Thus, in particular we have that, for all $n$, either $x_{n+1}=x_n$ or $x_{n+1}\leqslant x_n/2$.

For every $n$ we set
\begin{equation*}
A_n=\sum_{m\geqslant n}a_m.
\end{equation*}
Then for every $k$ we consider $\nu(k)$ such that $A_{\nu(k)}<2^{-k}$ and $a_{\nu(k)}<a_{\nu(k)-1}$. The second condition implies $a_{\nu(k)}/a_{\nu(k)-1}\leqslant2^{-s}$. We also take $k_0$ such that for every $k\geqslant k_0$, 
\begin{equation}\label{condicionh}
1+\frac{1}{1+k}<2^{s},    
\end{equation}
and for every $r\in(0,1)$ we define the function $h_r:\N\to\N$ by
\begin{equation*}
h_r(k)=\left\{\begin{array}{cc}
1+rk_0&\text{ if }k\leqslant k_0,
\\
1+rk&\text{ if }k> k_0.
\end{array}\right.
\end{equation*}
Next, we construct a sequence $\{b_n\}$ in the following way: if $n\in [\nu(k),\nu(k+1))$, we set
\begin{equation*}
b_n:=h_r(k)a_n.
\end{equation*}

Then we have $b_n/a_n\nearrow+\infty$ and
\begin{equation*}
\sum_nb_n=\sum_{k}\sum_{\nu(k)}^{\nu(k+1)-1} h_r(k)a_n\leqslant\sum_k h_r(k)A_{\nu(k)}\leqslant\sum_kh_r(k)2^{-k}<+\infty.
\end{equation*}
Also observe that either $b_{n+1}/b_n=a_{n+1}/a_n\leqslant1$ or
\begin{equation*}
\frac{b_{n+1}}{b_n}\leqslant\frac{h_r(k)a_{n+1}}{h_r(k-1)a_n}\leqslant\frac{h_r(k)a_{\nu(k)}}{h_r(k-1)a_{\nu(k)-1}}<1,
\end{equation*}
where the last inequality comes from the condition $a_{\nu(k)}/a_{\nu(k)-1}\leqslant2^{-s}$ together with \eqref{condicionh} and the fact that for every $k$ the function $r\mapsto r/(1+rk)$ is increasing. We conclude that $b_n$ is decreasing.

Now we define $H_r:(0,+\infty)\to(0,+\infty)$ such that:
\begin{itemize}
\item $H_r(x_n)=b_n/a_n$,
\item $H_r$ is affine on each interval $[x_{n+1},x_n]$, and
\item $H_r(x)=H_r(x_0)$ for every $x\geqslant x_0$.
\end{itemize}
This function is clearly decreasing and satisfies
\begin{equation}\label{prec}
\lim_{x\to 0}H_r(x)=+\infty.
\end{equation}

Finally we set $g_r:[0,+\infty)\to[0,+\infty)$ as
\begin{equation*}
g_r(x):=\left\{\begin{array}{ll}
H_r(x)x^s&\text{if }x>0,
\\
0&\text{if }x=0.
\end{array}\right.
\end{equation*}
We claim that $g_r$ is a dimension function. To see this it is enough to show that $g_r$ is increasing on every interval $[x_{n+1},x_n]$. Indeed, since $g_r$ is continuous in $(0,+\infty)$, $g_r(x_n)=b_n$ and $b_n\searrow0$, then, if $g_r$ is increasing on every interval $[x_{n+1},x_n]$, $g_r$ is global increasing and continuous at $0$.

If $H_r(x_n)=H_r(x_{n+1})$, then $g_r'(x)=sH_r(x_n)x^{s-1}>0$ for every $x\in(x_{n+1},x_n)$. In the other case $H_r(x_n)=h_r(k)<h_r(k+1)=H_r(x_{n+1})$ with $k\geqslant k_0$, which implies
\begin{equation*}
H_r(x)=\left(\frac{-r}{x_n-x_{n+1}}\right)x+\frac{rx_n}{x_n-x_{n+1}}+1+rk,\qquad\text{for every }x\in(x_{n+1},x_n).
\end{equation*}
Then for every $x\in(x_{n+1},x_n)$,
\begin{align*}
g_r'(x)=H_r'(x)x^s+sH_r(x)x^{s-1}&=x^{s-1}\left(\frac{-rx}{x_n-x_{n+1}}-\frac{srx}{x_n-x_{n+1}}+\frac{srx_n}{x_n-x_{n+1}}+s+srk\right)
\\
&>x^{s-1}r\left(\frac{-(1+s)x_n}{x_n-x_{n+1}}+sk\right)\geqslant x^{s-1}r\left(-2(1+s)+sk\right),
\end{align*}
where the last inequality comes from the relation $x_{n+1}\leqslant x_n/2$. Thus, if we choose $k_0\geqslant 2(1+s)/s$, then $g_r'(x)>0$ for every $x\in(x_{n+1},x_n)$.

Finally note that, by \eqref{prec}, $g_r(x)\prec x^s$ and
\begin{equation*}
\sum_ng_r(\diam(Q_n))=\sum_ng_r(x_n)=\sum_nb_n<+\infty.
\end{equation*}
Moreover, since $g_r(x)$ decreases pointwise to $x^s$ when $r\searrow0$, the Dominated Convergence Theorem implies that 
\begin{equation*}
\lim_{r\to 0}\sum_ng_r(\diam(Q_n))=\sum_n\diam(Q_n)^s<c\diam(Q)^s=c\lim_{r\to 0}g_r(\diam(Q)).
\end{equation*}
Which means that there exists $r_0\in(0,1)$ small enough such that if $g=g_{r_0}$, then 
\begin{equation}\label{final}
\Mm^g_\infty(F\cap Q)\leqslant\sum_ng(\diam(Q_n))<cg(\diam(Q))=c\Mm^{g}_\infty(Q),
\end{equation}
where the last equality comes from the sub-additivity of $g(x^{1/d})$: as $\displaystyle\frac{g(x)}{x^s}$ is decreasing then given $0\leqslant x\leqslant y$
\begin{align*}
g((x+y)^{1/d})\leqslant(x+y)^{s/d}\frac{g(y^{1/d})}{y^{s/d}}&\leqslant(x^{s/d}+y^{s/d})\frac{g(y^{1/d})}{y^{s/d}}
\\
&\leqslant g(y^{1/d})+x^{s/d}\frac{g(x^{1/d})}{x^{s/d}}\leqslant g(x^{1/d})+g(y^{1/d})
\end{align*}
also using the restriction $0<s\leqslant d$.

Anyhow, \eqref{final} shows that $F$ does not verify \eqref{GsBugeaud} for $g$ and $Q$ and therefore $F\notin\Gg^{x^s}$.
\end{proof}

As a consequence of Example \ref{ejemploreal} and Proposition \ref{GsJs} we conclude that Falconer's and Bugeaud's definitions give different families. The key difference between them is the fact that the dimension functions $x\mapsto x^t$, with $t<s$, are not ``dense enough'' within the family of all dimension functions $g\prec x^s$. A more direct way to see this is by considering the following example.

\begin{example}\label{BvsF}
Fix $\alpha>1$ and for each $j$ let $\{x_{j,k}\}_k$ be a $(1,2,e^{-j})$-net in $\R^d$. For every $j$ we define
\begin{equation*}
E_j=\bigcup_{k} B\left(x_{j,k},\frac{e^{-\alpha j}}{j^\kappa}\right),
\end{equation*}
where $\kappa>0$ is to be chosen later. Finally, we set $F:=\limsup_j E_j$.
We observe that for every $\delta>\alpha$ there exists $j_{\delta,\kappa}$ such that $\frac{e^{-\alpha j}}{j^\kappa}\geqslant e^{-\delta j}$ for every $j\geqslant j_{\delta,\kappa}$, and thus $F$ contains a $\delta$-approximable set for every $\delta>\alpha$. Then, by Theorem \ref{teoD}, we have $F\in\Gg^{d/\delta}$ for every $\delta>\alpha$, which implies $F\in\Gg^{d/\alpha}$.

We consider the dimension functions $f(t)=t^{d/\alpha}$ and 
\begin{equation*}
g(t)=t^{d/\alpha}\log(1+t^{-1})^\beta.
\end{equation*}
with $\beta>0$, which clearly satisfy $g\prec f$. Proceeding as in Example \ref{ejemploreal} and choosing $\kappa$ and $\beta$ so that $d\kappa/\alpha-\beta>1$ we obtain $\Hh^g_\infty(F)=0$, which implies $\Mm^g_\infty(F)=0$ and therefore $F\notin\Gg^f$.
\end{example}

In Examples \ref{ejemploreal} and \ref{BvsF} we have slightly modified the definion of $\alpha$-approximable sets in order to construct subsets that belong to certain classes and not to others. Let us use this idea once again to construct the following example, where a kind of $\Jj$-condition (or $\Gg$-condition) is satisfied locally.

\begin{example}
Consider $X=[2,\infty)$ and for every $j\in\N$ let $\{x_{j,k}\}_k$ be a family of $(1,2,e^{-j})$-nets. Now we define for each $j$
\begin{equation*}
E_j:=\bigcup_k B\left(x_{j,k},e^{-jx_{j,k}}\right),
\end{equation*}
and then $F=\limsup E_j$.

For every closed interval $\mathcal{I}=[a,b]\subset X$ we denote $\mathcal{I}_j=\{k : x_{j,k}\in \mathcal{I}\}$ and can consider two subsets $F^{\mathcal{I}}=\limsup E_j^{\mathcal{I}}$ and $F_{\mathcal{I}}=\limsup E^j_{\mathcal{I}}$ with
\begin{equation*}
E_j^{\mathcal{I}}=\bigcup_{k\in \mathcal{I}_j}B(x_{j,k},e^{-ja})\text{ and }E^j_{\mathcal{I}}=\bigcup_{k\in \mathcal{I}_j}B(x_{j,k},e^{-jb}).
\end{equation*}
It is easy to see that $F_{\mathcal{I}}\subset F\cap I\subset F^{\mathcal{I}}$. Then $\frac{1}{b}\leqslant \dim_\Hh F\leqslant \frac{1}{a}$ because $F_\mathcal{I}$ and $F^\mathcal{I}$ are sets of $b$ and $a$-approximable points (respectively) and $\mathcal{E}$ has exponential decay. Since $F\cap \mathcal{I}$ has subsets of dimension arbitrarily close to $\frac{1}{a}$, we conclude that $\dim_\Hh F\cap \mathcal{I}=\frac{1}{a}$. The same argument shows that $\dim_\Hh F =\frac{1}{2}$. On the other hand, we have that $F\cap \mathcal{I}\in\Gg^{1/b}(\mathcal{I})$. 

This is a very irregular metric space. For example it is clear that its only self locally bi-Lipschitz map is the identity.
\end{example}

\section*{Acknowledgments}
Part of this work was done while both authors were working at Centro de Matemática in Universidad de la República, Uruguay.  The first author was financed by a 3 month postdoctoral grant of the same institution.

The second author is supported by the Mathematical Center in Akademgorodok under the agreement No. 075-15-2019-1675 with the Ministry of Science and Higher Education of the Russian Federation.

\end{document}